\newtheorem{theorem}{Theorem}[section]
\newtheorem{lemma}[theorem]{Lemma}
\theoremstyle{definition}
\theoremstyle{remark}
\newtheorem{remark}[theorem]{Remark}
\numberwithin{equation}{section}
\newcommand\fpp[2]{{\frac{\partial #1}{\partial#2}}}
\newcommand\dd{{\mathop{}\!\mathrm{d}}}
\newcommand\cper{{\mathcal{C}_{\mathrm{per}}}}
\newcommand\eperx{{\mathcal{E}_{\mathrm{per}}^\mathrm{x}}}
\newcommand\epery{{\mathcal{E}_{\mathrm{per}}^\mathrm{y}}}
\newcommand\eperz{{\mathcal{E}_{\mathrm{per}}^\mathrm{z}}}
\newcommand\veper{{\vec{\mathcal{E}}_{\mathrm{per}}}}
\newcommand\vf{{\vec{f}}}
\newtheorem{proposition}[theorem]{Proposition}
\begin{document}

\title[An Iterative Solver for CH equation with Log-Singularities]
{Overcoming Logarithmic Singularities in the Cahn-Hilliard Equation with Flory-Huggins Potential: An Unconditionally Convergent ADMM Approach}


\author{Ruo Li}
\address{CAPT, LMAM and School of Mathematical Sciences,
Peking University, Beijing, China;
Chongqing Research Institute of Big Data, Peking University, Chongqing, China}
\curraddr{}
\email{rli@math.pku.edu.cn}
\thanks{The first author's research was partially supported by the National
Natural Science Foundation of China (Grant No. 12288101)}

\author{Shengtong Liang}
\address{School of Mathematical Sciences, Peking University, Beijing, China}
\curraddr{}
\email{liangshengtong@pku.edu.cn}
\thanks{}

\author{Zhonghua Qiao}
\address{Corresponding author. Department of Applied Mathematics and Research Institute for Smart
Energy, The Hong Kong Polytechnic University, Hung Hom, Kowloon, Hong Kong}
\curraddr{}
\email{zhonghua.qiao@polyu.edu.hk}
\thanks{The third author's research was partially
supported by the Hong Kong Research Grants Council (RFS project RFS2021-5S03
and GRF project 15305624) and the CAS AMSS-PolyU Joint Laboratory of Applied Mathematics (No. JLFS/P-501/24).}

\thanks{This work was partially supported by High-performance Computing Platform of Peking
University.}

\dedicatory{}

\date{}

\subjclass[2020]{Primary 65H10, 65M22}

\keywords{Cahn-Hilliard equation, Flory-Huggins potential,
iterative solver, alternating direction method of multipliers, convergence
analysis.}

\begin{abstract}
The Cahn-Hilliard equation with Flory-Huggins potential serves as a fundamental
phase field model for describing phase separation phenomena. Due to the presence
of logarithmic singularities at  $u=\pm 1$, the solution $u$ is constrained
within the interval $(-1,1)$. While convex splitting schemes are commonly
employed to preserve this bound and guarantee unconditional unique solvability,
their practical implementation requires solving nonlinear systems containing
singular logarithmic terms at each time step. This introduces significant
challenges in both ensuring convergence of iterative solvers and maintaining the
solution bounds throughout the iterations. Existing solvers often rely on
restrictive conditions---such as the strict separation property or small time
step sizes---to ensure convergence, which can limit their applicability. In this
work, we introduce a novel iterative solver that is specifically designed for
singular nonlinear systems, with the use of a variant of the alternating
direction method of multipliers (ADMM). By developing a tailored variable
splitting strategy within the ADMM framework, our method efficiently decouples
the challenging logarithmic nonlinearity, enabling effective handling of
singularities. Crucially, we rigorously prove the unconditional convergence of
our ADMM-based solver, which removes the need for time step constraints or
strict separation conditions. This allows us to fully leverage the unconditional
solvability offered by convex splitting schemes. Comprehensive numerical
experiments demonstrate the superior efficiency and robustness of our ADMM
variant, strongly validating both our algorithmic design and theoretical
results.
\end{abstract}

\maketitle

\section{Introduction}

The Cahn-Hilliard equation \cite{cahn1958free} is a fundamental model within the
phase field methodology. It describes the diffusion dynamics at the interface
between two phases. As an $H^{-1}$ gradient flow of an energy functional, its
dynamics are governed by the choice of potential. In this paper, we focus on the
Flory-Huggins energy potential given by
\begin{equation}
    \label{eq:flory-huggins}
    E(u)=\int_\Omega \left((1+u)\log(1+u)+(1-u)\log(1-u)-\frac{\theta_0}{2}u^2
    +\frac{\epsilon^2}{2}|\nabla u|\right)\dd x.
\end{equation}
The Cahn-Hilliard equation with the Flory-Huggins potential can be written as
\begin{subequations}
\label{eq:CH}
\begin{align}
    &\fpp{u}{t}=\Delta w,\\
    &w=-\epsilon^2\Delta u+\log(1+u)-\log(1-u)-\theta_0 u,\quad
    x\in\Omega,\ t>0,
\end{align}
\end{subequations}
where $\Omega=(0,L)^d,d=2,3$ for a positive number $L$, and periodic boundary
conditions are imposed. $u=u(x,t)$ is the concentration of material components
in a two-phase system, and $w=w(x,t)$ is an auxiliary variable. The parameters
$\epsilon$ and $\theta_0$ are two positive constants associated with the diffuse
interface width.

There exist a variety of well-established numerical methods for the spatial
discretization of the Cahn-Hilliard equation, including finite difference
methods, finite element methods, and spectral methods, see e.g.,
\cites{chen1997applications,elliott1992error,furihata2001stable}. For temporal
discretization, numerous efficient numerical techniques have also been
developed. For further details, the reader is referred to \cites{DU2020425,
DuQiao2021,Hochbruck_Ostermann_2010,effective,tang2020efficient}. The
Cahn-Hilliard equation equipped with the Flory-Huggins energy potential provides
greater physical relevance compared to the frequently used polynomial potential,
as the logarithmic form of the Flory-Huggins potential can be rigorously derived
from theoretical considerations (see \cite{doi2013soft}). Notably, the natural
singularity at $u=\pm 1$ ensures that the solution remains within the physical
bound $-1<u<1$ without the need for artificial constraints. Given the importance
of this equation, many analytical studies have emerged in recent years, see
e.g., \cites{abels2007convergence,cherfils2011cahn,debussche1995,li2021,
miranville2012on,mirancille2004robust}. The logarithmic terms present in the
Flory-Higgins potential pose significant challenges for numerical
discretization. Although the physical constraint $-1<u<1$ has been rigorously
established \cite{miranville2019cahn}, only a limited number of numerical
schemes explicitly ensure the preservation of this bound. Copetti et al.
\cite{copetti1992numerical} developed a finite element method with implicit
Euler time discretization that is capable of preserving solution bounds.
However, the guarantee of unique solvability for their approach necessitates
stringent restrictions on the time step size. To overcome this limitation and
ensure unconditional unique solvability, Chen et al. \cite{chen2019positivity}
developed two convex splitting finite difference schemes, offering either
first-order or second-order accuracy in time.

Convex splitting methods, which employ semi-implicit temporal discretization, 
result in nonlinear discrete systems that necessitate iterative solution
procedures  at each time step. The challenge of solving these systems is
particularly pronounced for logarithmic potentials due to their strong
nonlinearity. Although this difficulty is well-recognized in the literature and
has inspired the development of specialized solvers, previous studies seldom
address the issue of bound preservation during nonlinear iterations---a crucial
aspect for establishing rigorous convergence results. Two notable strategies
have emerged for tackling these discrete nonlinear systems
\cites{chen2019positivity,diegel2025convergence}. Chen et al.
\cite{chen2019positivity} introduced a multigrid solver that employs
linearization techniques to handle the nonlinearities inherent in convex
splitting schemes. In their approach, the nonlinear logarithmic term is
approximated linearly near its singularities (specifically, outside the interval
$[-1+\delta,1-\delta]$ for a small parameter $\delta$), effectively extending
the domain of the nonlinear term to the entire real line $\mathbb{R}$. This
extension allows the iterative solver to proceed even if the numerical solution
temporarily leaves the physical interval $(-1,1)$. During each iteration, the
nonlinear term is treated explicitly, and the resulting linear system is solved
efficiently using multigrid methods. Despite its empirical success, this
strategy has two significant drawbacks: (a) the linearized system accurately
reflects the original problem only when the solution strictly satisfies the
separation property ($\|u\|_\infty\leq 1-\delta$) for sufficiently small
$\delta$, and (b) there is no theoretical guarantee of convergence.
Alternatively, Diegel et al. \cite{diegel2025convergence} proposed a
preconditioned steepest descent method that integrates gradient descent with an
exact line search. In this framework, each iteration involves computing a
descent direction by solving a linearized system, followed by determining the
optimal step size. Importantly, the method preserves the logarithmic structure
of the nonlinear term, ensuring that iterates remain confined to the interval
$(-1,1)$ due to the singularity of the Flory-Huggins potential, thus eliminating
the need for artificial regularization. While the method achieves linear
convergence under certain assumptions, its practical applicability is hindered 
by three main limitations: (a) the convergence analysis depends on the strict
separation property, with no results available in its absence; (b) convergence
is guaranteed only for sufficiently small temporal and spatial discretization
steps, which is inconsistent with the unconditional stability typically
associated with convex splitting schemes; and (c) the method is only applicable
to first-order numerical schemes, with second-order cases remaining unaddressed.


In summary, the two methods discussed above exhibit several key limitations: (a)
dependence on the strict separation property, the validity of which in three
dimensions is still unresolved \cite{miranville2019cahn}; (b) absence of
unconditional convergence that holds regardless of discrete step sizes, thereby
undermining the unconditional solvability typically provided by convex-splitting
schemes; and (c) restricted applicability to higher-order numerical methods or
broader classes of phase field models, due to the lack of a unified algorithmic
framework amenable to rigorous theoretical analysis.

In this paper, we propose a novel iterative solver for the nonlinear discrete
system that emerges from convex splitting finite difference discretizations of
the Cahn-Hilliard equation with the Flory-Huggins potential
\cite{chen2019positivity}. Our approach reformulates the nonlinear system as a
saddle-point minimax optimization problem characterized by a convex-concave
objective function. To solve this minimax problem, we develop an alternating
direction method of multipliers (ADMM) framework. Specifically, we decompose the
objective function into linear and nonlinear components, allowing us to address
each part separately in each iteration. The subproblem corresponding to the
linear component is efficiently solvable due to its constant coefficient matrix,
which enables the use of discrete fast Fourier transform (FFT) techniques. For
the nonlinear component, we reduce the problem to several independent scalar
equations, each possessing a monotonic derivative and thus amenable to efficient
solution via Newton's method. Extensive numerical experiments demonstrate the
robust convergence and effectiveness of our new iterative solver.

The proposed algorithm is straightforward to implement and systematically
overcomes the shortcomings of previous methods through three main innovations:
(a) Our convergence analysis does not rely on the strict separation property.
Moreover, by retaining the complete logarithmic form of the Flory-Huggins
potential, the solution is inherently restricted to the interval $(-1,1)$
throughout the iterative process. (b) The new iterative scheme is
unconditionally convergent, regardless of discrete time step sizes, thereby
preserving the unconditional unique solvability characteristic of convex
splitting approaches. This marks a substantial advancement over existing
techniques. (c) Our method is formulated within a unified ADMM-based framework,
which readily accommodates second-order temporal discretizations and
three-dimensional problems. Theoretical convergence guarantees are maintained
across all spatial dimensions and time discretizations, ensuring both robustness
and reliability. Additionally, this solver can be easily adapted to other phase
field models within the same ADMM framework.

The remainder of this paper is organized as follows: Section \ref{sec:review}
reviews finite difference discretizations of the Cahn-Hilliard equation using
convex splitting. Section \ref{sec:admm} introduces our novel iterative solver
for these discrete schemes. In Section \ref{sec:convergence}, we establish the
convergence of the solver and discuss additional properties. Numerical results
demonstrating the solver's effectiveness and robustness are presented in Section
\ref{sec:numerical}. Finally, concluding remarks are given in Section
\ref{sec:conclusion}.

\section{Review of the numerical schemes}
\label{sec:review}

In this section, we define the spatial discretization notations, and introduce
the first-order and second-order convex splitting numerical schemes for the
Cahn-Hilliard equation. The following notations are partially taken from
\cites{chen2019positivity,wise2009energy}.

We present the three-dimensional case here, and the two-dimensional case is
similar and therefore omitted. For simplicity, we use periodic boundary
conditions, that is, the domain $\Omega=(0,L)^3$, the periodic boundary
conditions for equation \eqref{eq:CH} are written as
\begin{equation}
    u(x+L,y,z)=u(x,y+L,z)=u(x,y,z+L)=u(x,y,z),\quad
    \forall (x,y,z)\in\mathbb{R}^3.
\end{equation}
For any $N\in\mathbb{N}_+$, the mesh size is given by $h=L/N$, and we use the
same mesh spacing in the $x$-direction and $y$-direction. Additionally, the
following two uniform grids are introduced,
\begin{equation}
\label{eq:grids}
    E=\{p_{i+\frac{1}{2}}|i\in\mathbb{Z}\},\quad C=\{p_i|i\in\mathbb{Z}\},
\end{equation}
where $p_i=p(i)=(i-\frac{1}{2})h$ with grid spacing $h>0$.

With the grids $C$ and $E$ in \eqref{eq:grids}, we define four $3$D discrete
$N^3$-periodic function spaces,
{\small \begin{subequations}
    \begin{align}
    &\cper=
    \{\nu:C\times C\times C\to\mathbb{R}|\nu_{i,j,k}
    =\nu_{i+\lambda N,j+\mu N,k+\theta N},\
    \forall i,j,k,\lambda,\mu,\theta\in\mathbb{Z}\},\\
    &\eperx=\{\nu:E\times C\times C\to\mathbb{R}|\nu_{i+\frac{1}{2},j,k}
    =\nu_{i+\frac{1}{2}+\lambda N,j+\mu N,k+\theta N},\
    \forall i,j,k,\lambda,\mu,\theta\in\mathbb{Z}\},\\
    &\epery=\{\nu:C\times E\times C\to\mathbb{R}|\nu_{i,j+\frac{1}{2},k}
    =\nu_{i+\lambda N,j+\frac{1}{2}+\mu N,k+\theta N},\
    \forall i,j,k,\lambda,\mu,\theta\in\mathbb{Z}\},\\
    &\eperz=\{\nu:C\times C\times E\to\mathbb{R}|\nu_{i,j,k+\frac{1}{2}}
    =\nu_{i+\lambda N,j+\mu N,k+\frac{1}{2}+\theta N},\
    \forall i,j,k,\lambda,\mu,\theta\in\mathbb{Z}\},
    \end{align}
\end{subequations}}%
where $\nu_{i,j,k}=\nu(p_i,p_j,p_k)$. Finally, the space
$\veper=\eperx\times\epery\times\eperz$ is introduced.

The spatial difference operators are defined as
\begin{subequations}
    \begin{align}
    (D_x\nu)_{i+\frac{1}{2},j,k}=
    \frac{1}{h}(\nu_{i+1,j,k}-\nu_{i,j,k}),\\
    (D_y\nu)_{i,j+\frac{1}{2},k}=
    \frac{1}{h}(\nu_{i,j+1,k}-\nu_{i,j,k}),\\
    (D_z\nu)_{i,j,k+\frac{1}{2}}=
    \frac{1}{h}(\nu_{i,j,k+1}-\nu_{i,j,k}),
    \end{align}
\end{subequations}
with $D_x:\cper\to\eperx$,  $D_y:\cper\to\epery$, and $D_z:\cper\to\eperz$.
Similarly,
\begin{subequations}
    \begin{align}
    (d_x\nu)_{i,j,k}=
    \frac{1}{h}(\nu_{i+\frac{1}{2},j,k}-\nu_{i-\frac{1}{2},j,k}),\\
    (d_y\nu)_{i,j,k}=
    \frac{1}{h}(\nu_{i,j+\frac{1}{2},k}-\nu_{i,j-\frac{1}{2},k}),\\
    (d_z\nu)_{i,j,k}=
    \frac{1}{h}(\nu_{i,j,k+\frac{1}{2}}-\nu_{i,j,k-\frac{1}{2}}),
    \end{align}
\end{subequations}
with $d_x:\eperx\to\cper$, $d_y:\epery\to\cper$, and $d_z:\eperz\to\cper$. The
discrete gradient $\nabla_h:\cper\to\veper$ is defined as
\begin{equation}
    (\nabla_h\nu)_{i,j,k}
    =((D_x\nu)_{i+\frac{1}{2},j,k},(D_y\nu)_{i,j+\frac{1}{2},k},
    (D_z\nu)_{i,j,k+\frac{1}{2}}),
\end{equation}
and the discrete divergence $\nabla_h\cdot:\veper\to\cper$ becomes
\begin{equation}
    (\nabla_h\cdot\vf)_{i,j,k}=
    (d_xf^x)_{i,j,k}+(d_yf^y)_{i,j,k}+(d_zf^z)_{i,j,k},
\end{equation}
where $\vf=(f^x,f^y,f^z)\in\veper$. The $3$D discrete Laplacian,
$\Delta_h:\cper\to\cper$, is given by
\begin{equation}
\begin{aligned}
    &(\Delta_h\nu)_{i,j,k}=(\nabla_h\cdot\nabla_h \nu)_{i,j,k}\\
    &\ \ =\frac{1}{h^2}(\nu_{i+1,j,k}+\nu_{i-1,j,k}+\nu_{i,j+1,k}+\nu_{i,j-1,k}
    +\nu_{i,j,k+1}+\nu_{i,j,k-1}-6\nu_{i,j,k}).
\end{aligned}
\end{equation}

We define the following grid inner products:
\begin{subequations}
    \begin{align}
    &\langle \nu,\xi\rangle
    =h^2\sum_{i,j,k=1}^N\nu_{i,j,k}\xi_{i,j,k},\quad\nu,\xi\in\cper,\\
    &\langle \nu,\xi\rangle
    =h^2\sum_{i,j,k=1}^N\nu_{i+\frac{1}{2},j,k}\xi_{i+\frac{1}{2},j,k},\quad
    \nu,\xi\in\eperx,\\
    &\langle \nu,\xi\rangle
    =h^2\sum_{i,j,k=1}^N\nu_{i,j+\frac{1}{2},k}\xi_{i,j+\frac{1}{2},k},\quad
    \nu,\xi\in\epery,\\
    &\langle \nu,\xi\rangle
    =h^2\sum_{i,j,k=1}^N\nu_{i,j,k+\frac{1}{2}}\xi_{i,j,k+\frac{1}{2}},\quad
    \nu,\xi\in\eperz,\\
    &\langle \vf_1,\vf_2\rangle
    =\langle f_1^x,f_2^x\rangle+\langle f_1^y,f_2^y\rangle
    +\langle f_1^z,f_2^z\rangle,\ \
    \vf_i=(f_i^x,f_i^y,f_i^z)\in\veper,\ i=1,2.
    \end{align}
\end{subequations}

The norms for cell-centered functions are accordingly introduced. If
$\nu\in\cper$, then $\|\nu\|_2^2=\langle\nu,\nu\rangle$,
$\|\nu\|_1=\langle |\nu|,1\rangle$, and
$\|\nu\|_\infty=\max_{1\leq i,j\leq N}|\nu_{i,j}|$. The gradient norms are
similarly defined: for $\nu\in\cper$,
$\|\nabla_h\nu\|_2^2=\langle\nabla_h\nu,\nabla_h\nu\rangle$.

\begin{proposition}
    For any $\psi,\nu\in\cper$ and any $\vf\in\veper$, the following summation
    by parts formulas are valid:
    \begin{equation}
        \langle\psi,\nabla_h\cdot\vf\rangle
        =-\langle\nabla_h\psi,\vf\rangle,\quad
        \langle\psi,\Delta_h\nu\rangle=-\langle\nabla_h\psi,\nabla_h\nu\rangle.
    \end{equation}
\end{proposition}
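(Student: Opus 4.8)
The plan is to establish the first identity and then obtain the second as an immediate corollary. By definition the divergence $\nabla_h\cdot\vf$ is the sum of the three one-dimensional difference operators $d_xf^x$, $d_yf^y$, $d_zf^z$, while the gradient inner product splits correspondingly as $\langle\nabla_h\psi,\vf\rangle=\langle D_x\psi,f^x\rangle+\langle D_y\psi,f^y\rangle+\langle D_z\psi,f^z\rangle$. By linearity of the inner products it therefore suffices to prove the single-direction identity
\begin{equation*}
    \langle\psi,d_xf^x\rangle=-\langle D_x\psi,f^x\rangle,
\end{equation*}
together with its $y$- and $z$-analogues; summing the three versions then yields the first claim.

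To prove this one-dimensional identity, I would freeze the indices $j,k$ and work with the inner sum $\sum_{i=1}^N\psi_{i,j,k}(f^x_{i+\frac12,j,k}-f^x_{i-\frac12,j,k})$ arising from the definition of $d_x$. Splitting it into two sums and performing the index shift $i\mapsto i+1$ in the sum involving $f^x_{i-\frac12,j,k}$, the $N^3$-periodicity of both $\psi$ and $f^x$ guarantees that the summation range $\{1,\dots,N\}$ maps back onto itself with no leftover endpoint contributions. After reindexing, the two sums recombine into $-\sum_{i=1}^N(\psi_{i+1,j,k}-\psi_{i,j,k})f^x_{i+\frac12,j,k}$, and recognizing that $\psi_{i+1,j,k}-\psi_{i,j,k}=h(D_x\psi)_{i+\frac12,j,k}$, while keeping track of the factor $1/h$ inside $d_x$ and the weight $h^2$ in the inner product, produces exactly $-\langle D_x\psi,f^x\rangle$. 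The same computation applies verbatim in the $y$ and $z$ directions, and summing gives the first formula.

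The second identity is then a direct consequence. Writing $\Delta_h=\nabla_h\cdot\nabla_h$ and applying the first identity to the vector field $\vf=\nabla_h\nu\in\veper$ gives
\begin{equation*}
    \langle\psi,\Delta_h\nu\rangle=\langle\psi,\nabla_h\cdot(\nabla_h\nu)\rangle=-\langle\nabla_h\psi,\nabla_h\nu\rangle,
\end{equation*}
which is the desired formula. I expect the only delicate point to be the bookkeeping in the index shift: one must check carefully that periodicity truly eliminates the boundary terms, since this is precisely the step where the argument would break down under non-periodic boundary conditions. Everything else is routine algebra driven by the definitions of the difference operators and the weighted inner products.
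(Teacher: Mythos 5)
Your proof is correct: the paper states this proposition without proof (it is a standard fact imported from the cited references on finite difference discretizations), and your argument — reducing to the one-dimensional identity $\langle\psi,d_xf^x\rangle=-\langle D_x\psi,f^x\rangle$ via an index shift that periodicity closes up, then obtaining the Laplacian identity from $\Delta_h=\nabla_h\cdot\nabla_h$ — is exactly the standard derivation one would supply. You also correctly identify periodicity as the one place where the argument genuinely uses a hypothesis, so nothing is missing.
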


Now we can introduce the first-order and second-order convex splitting schemes
for the Cahn-Hilliard equation \eqref{eq:CH}. Consider a uniform partition of
time, $0=t_0<t_1<\cdots<t_F=T$, such that $t_n=n\tau$. Let $u^n$ denote the
numerical solution at time $t_n$. The first-order convex splitting scheme to the
Cahn-Hilliard equation \eqref{eq:CH}, proposed in \cite{chen2019positivity}, is
stated as follows: given $u^n\in\cper$, find $u^{n+1}\in\cper$ such that
\begin{subequations}
\label{eq:first-order}
    \begin{align}
    &u^{n+1}-u^n=\tau\Delta_h w^{n+1},\\
    &w^{n+1}
    =\log(1+u^{n+1})-\log(1-u^{n+1})-\theta_0 u^n-\epsilon^2\Delta_h u^{n+1}.
    \end{align}
\end{subequations}
The second-order convex splitting scheme for the Cahn-Hilliard equation
\eqref{eq:CH}, also proposed in \cite{chen2019positivity}, is stated as follows:
given $u^n,u^{n-1}\in\cper$, find $u^{n+1}\in\cper$ such that
\begin{subequations}
\label{eq:second-order}
    \begin{align}
    &3u^{n+1}-4u^n+u^{n-1}=2\tau\Delta_h w^{n+1},\\
    &\begin{aligned}&w^{n+1}=\log(1+u^{n+1})-\log(1-u^{n+1})\\
    &\hspace{60pt}-\theta_0 (2u^n-u^{n-1})-\epsilon^2\Delta_h u^{n+1}
    -A\tau\theta_0^2\Delta_h(u^{n+1}-u^n).\end{aligned}
    \end{align}
\end{subequations}
A modified energy stability is available for the second-order scheme
\eqref{eq:second-order}, provided that the stabilization parameter
$A\geq 1/16$, see \cite{chen2019positivity} for more details. 
\section{The new iterative solvers based on ADMM framework}
\label{sec:admm}

In this section, we introduce new iterative solvers based on a novel variant of
the ADMM for both first-order and second-order schemes. The classic ADMM has
been extensively applied to minimization problems (see
\cites{bertsekas1997parallel, fazel2013hankel}). By adopting a similar
methodology, we derive an iterative solver tailored to minimax optimization
problems. Although our approach also targets minimax problems, the framework
developed here is fundamentally distinct from those proposed in
\cites{karabag2021deception, karabag2022alternating}.

\subsection{Iterative solver for the first-order convex splitting scheme}

First, we transform the first-order convex splitting scheme
\eqref{eq:first-order} into a minimax optimization problem.

For a given $u^n\in\cper$, define a discrete energy
\begin{equation}
\label{eq:firstZ}
    \begin{aligned}&Z(u,w)
    =\frac{\epsilon^2}{2}\|\nabla_h u\|_2^2+\langle 1+u,\log(1+u)\rangle
    +\langle 1-u,\log(1-u)\rangle\\
    &\hspace{100pt}-\theta_0 \langle u^n,u\rangle-\langle u-u^n,w\rangle
    -\frac{\tau}{2}\|\nabla_h w\|_2^2,\end{aligned}
\end{equation}
over the set $\cper\times\cper$. Then, solving \eqref{eq:firstZ} is
equivalent to finding the solution of the optimization problem
\begin{equation}
\label{eq:optini}
    \min_u\max_w\quad Z(u,w),\qquad \mathrm{s.t.}\quad u,w\in\cper.
\end{equation}

Dividing $Z(u,w)$ into the sum of two parts $Z_1(u,w)$ and $Z_2(u,w)$,
\begin{subequations}
    \begin{align}
    &Z_1(u,w)=\frac{\epsilon^2}{2}\|\nabla_h u\|_2^2
    -\alpha\langle u-u^n,w\rangle-\frac{\tau}{2}\|\nabla_h w\|_2^2,\\
    &\begin{aligned}
    &Z_2(u,w)=\langle 1+u,\log(1+u)\rangle+\langle 1-u,\log(1-u)\rangle\\
    &\hspace{100pt}
    -\theta_0\langle u^n,u\rangle-(1-\alpha)\langle u-u^n,w\rangle,
    \end{aligned}
    \end{align}
\end{subequations}
the optimization problem \eqref{eq:optini} can be equivalently converted to the
optimization problem
\begin{equation}
\label{eq:opt}
    \min_{u_1,u_2}\max_{w_1,w_2}\ Z_1(u_1,w_1)+Z_2(u_2,w_2),\quad
    u_1=u_2,\ w_1=w_2.
\end{equation}
We write the Lagrange function as
{\small \begin{equation}
\label{eq:largrange}
    \mathcal{L}(u_1,w_1,u_2,w_2,u_3,w_3)=Z_1(u_1,w_1)+Z_2(u_2,w_2)
    +\langle u_3,u_1-u_2\rangle-\langle w_3,w_1-w_2\rangle,
\end{equation}}%
where $u_3,w_3\in\cper$ are the Lagrange multipliers.
The augmented Lagrangian function of the problem \eqref{eq:opt} is
\begin{equation}
    \begin{aligned}
    &\mathcal{L}_{\rho_u,\rho_w}(u_1,w_1,u_2,w_2,u_3,w_3)\\
    &\hspace{20pt}
    =\mathcal{L}(u_1,w_1,u_2,w_2,u_3,w_3)+\frac{\rho_u}{2}\|u_1-u_2\|_2^2
    -\frac{\rho_w}{2}\|w_1-w_2\|_2^2,
    \end{aligned}
\end{equation}
where $\rho_u,\rho_w$ are two positive constants.

The $k$-th step of ADMM iteration for the optimization problem \eqref{eq:opt} is
{\small \begin{subequations}
    \begin{align}
    &(u_1^{(k+1)},w_1^{(k+1)})=\mathop{\arg\min\max}\limits_{(u_1,w_1)}
    \mathcal{L}_{\rho_u,\rho_w}
    (u_1,w_1,u_2^{(k)},w_2^{(k)},u_3^{(k)},w_3^{(k)}),
    \label{eq:admm_detail_1} \\
    &(u_2^{(k+1)},w_2^{(k+1)})=\mathop{\arg\min\max}\limits_{(u_2,w_2)}
    \mathcal{L}_{\rho_u,\rho_w}
    (u_1^{(k+1)},w_1^{(k+1)},u_2,w_2,u_3^{(k)},w_3^{(k)}),
    \label{eq:admm_detail_2} \\
    &u_3^{(k+1)}=u_3^{(k)}+\rho_u(u_1^{(k+1)}-u_2^{(k+1)}),\quad
    w_3^{(k+1)}=w_3^{(k)}+\rho_w(w_1^{(k+1)}-w_2^{(k+1)}),
    \end{align}
\end{subequations}}%
where $\mathop{\arg\min\max}_{(u_i,w_i)}$ means the optimal point for
corresponding minimax subproblem, i.e., $(u_i,w_i)$ is the optimal solution of
the subproblem
\begin{equation}
    \min_{u_i\in\cper}\max_{w_i\in\cper}\mathcal{L}_{\rho_u,\rho_w}(\cdots),
    \quad i=1,2,
\end{equation}
where the ellipse represents the corresponding variables in
\eqref{eq:admm_detail_1} and \eqref{eq:admm_detail_2}.

The detailed process of the iterative solver for the first-order convex
splitting method is presented in Algorithm \ref{alg:first-order}.

\begin{algorithm}[!t]
    \caption{Iterative solver for the first-order convex splitting scheme}
\label{alg:first-order}
    \raggedright
    1. Define $u_2^{(0)}=u^n$, $w_2^{(0)}=0$, $u_3^{(0)}=0$, and
    $w_3^{(0)}=0$.\\
    2. Select $\alpha\in(0,1)$, $\rho_u\in(0,+\infty)$,
    and $\rho_w\in(0,+\infty)$.\\
    3. For $k\in\mathbb{N}$, solve the linear system
    \begin{subequations}
    \label{eq:linear}
        \begin{align}
        &-\epsilon^2\Delta_hu_1^{(k+1)}-\alpha w_1^{(k+1)}+u_3^{(k)}
        +\rho_u(u_1^{(k+1)}-u_2^{(k)})=0,\\
        &\alpha(-u_1^{(k+1)}+u^n)+\tau\Delta_h w_1^{(k+1)}-w_3^{(k)}
        -\rho_w(w_1^{(k+1)}-w_2^{(k)})=0, \label{eq:linear_2}
        \end{align}
    \end{subequations}
    \hspace{10pt} for $u_1^{(k+1)},w_1^{(k+1)}\in\cper$.\\
    4. Solve the nonlinear system
    \begin{subequations}
    \label{eq:nonlinear}
        \begin{align}
        &\begin{aligned}
        &\log(1+u_2^{(k+1)})-\log(1-u_2^{(k+1)})-\theta_0u^n\\
        &\hspace{100pt}-(1-\alpha) w_2^{(k+1)}
        -u_3^{(k)}-\rho_u(u_1^{(k+1)}-u_2^{(k+1)})=0,\end{aligned}\\
        &(1-\alpha)(-u_2^{(k+1)}+u^n)+w_3^{(k)}
        +\rho_w(w_1^{(k+1)}-w_2^{(k+1)})=0,
        \label{eq:nonlinear_2}
        \end{align}
    \end{subequations}
    \hspace{10pt} for $u_2^{(k+1)},w_2^{(k+1)}\in\cper$.\\
    5. Update the Lagrange multipliers $u_3^{(k+1)},w_3^{(k+1)}\in\cper$ as
    \begin{equation}
    \label{eq:update}
        u_3^{(k+1)}=u_3^{(k)}+\rho_u(u_1^{(k+1)}-u_2^{(k+1)}),\quad
        w_3^{(k+1)}=w_3^{(k)}+\rho_w(w_1^{(k+1)}-w_2^{(k+1)}).
    \end{equation}
\end{algorithm}

\subsection{Iterative solver for the second-order convex splitting scheme}
Next, we transform the second-order convex splitting scheme
\eqref{eq:second-order} into a minimax optimization problem.

Let $u^n,u^{n-1}\in\cper$ be given. Define the discrete energy
{\small \begin{equation}
\label{eq:secondZ}
    \begin{aligned}
    &Z(u,w)=\left(\frac{\epsilon^2}{2}+\frac{A\tau\theta_0^2}{2}\right)
    \|\nabla_h u\|_2^2+\langle 1+u,\log(1+u)\rangle
    +\langle 1-u,\log(1-u)\rangle\\
    &\hspace{20pt}-\theta_0 \langle 2u^n-u^{n-1},u\rangle-\left\langle u
    -\frac{4}{3}u^n+\frac{1}{3}u^{n-1},w\right\rangle
    -\frac{\tau}{3}\|\nabla_h w\|_2^2,
    \end{aligned}
\end{equation}}%
over the admissible set $\cper\times\cper$. Then, solving \eqref{eq:secondZ}
is equivalent to finding the solution of the optimization problem
\eqref{eq:optini}.

The optimization problem \eqref{eq:optini} can be similarly transformed and
solved using the iterative solver. The specific algorithm for the
solver for the second-order convex splitting method is presented in Algorithm
\ref{alg:second-order}.

\begin{algorithm}[!t]
    \caption{Iterative solver for the second-order convex splitting scheme}
    \label{alg:second-order}
    \raggedright
    1. Define $u_2^{(0)}=u^n$, $w_2^{(0)}=0$, $u_3^{(0)}=0$, and
    $w_3^{(0)}=0$.\\
    2. Select $\alpha\in(0,1)$, $\rho_u\in(0,+\infty)$, and
    $\rho_w\in(0,+\infty)$.\\
    3. For $k\in\mathbb{N}$, solve the linear system
    \begin{subequations}
    \label{eq:secline}
        \begin{align}
        &(-\epsilon^2-A\tau\theta_0^2)\Delta_hu_1^{(k+1)}
        +A\tau\theta_0^2\Delta_h u^n
        -\alpha w_1^{(k+1)}+u_3^{(k)}+\rho_u(u_1^{(k+1)}-u_2^{(k)})=0,\\
        &\alpha\left(-u_1^{(k+1)}+\frac{4}{3}u^n-\frac{1}{3}u^{n-1}\right)
        +\frac{2\tau}{3}\Delta_h w_1^{(k+1)}-w_3^{(k)}
        -\rho_w(w_1^{(k+1)}-w_2^{(k)})=0,
        \end{align}
    \end{subequations}
    \hspace{10pt} for $u_1^{(k+1)},w_1^{(k+1)}\in\cper$.\\
    4. Solve the nonlinear system
    \begin{subequations}
    \label{eq:secnonlin}
        \begin{align}
        &\begin{aligned}&\log(1+u_2^{(k+1)})-\log(1-u_2^{(k+1)})
        -\theta_0(2u^n-u^{n-1})\\
        &\hspace{100pt}-(1-\alpha)w_2^{(k+1)}-u_3^{(k)}
        -\rho_u(u_1^{(k+1)}-u_2^{(k+1)})=0,\end{aligned}\\
        &(1-\alpha)\left(-u_2^{(k+1)}+\frac{4}{3}u^n-\frac{1}{3}u^{n-1}\right)
        +w_3^{(k)}+\rho_w(w_1^{(k+1)}-w_2^{(k+1)})=0,
        \end{align}
    \end{subequations}
    \hspace{10pt} for $u_2^{(k+1)},w_2^{(k+1)}\in\cper$.\\
    5. Update the Lagrange multipliers $u_3^{(k+1)},w_3^{(k+1)}\in\cper$ as
    \begin{equation}
        u_3^{(k+1)}=u_3^{(k)}+\rho_u(u_1^{(k+1)}-u_2^{(k+1)}),\quad
        w_3^{(k+1)}=w_3^{(k)}+\rho_w(w_1^{(k+1)}-w_2^{(k+1)}).
    \end{equation}
\end{algorithm}

\begin{remark}
    Equation \eqref{eq:linear} and \eqref{eq:secline} can be exactly and
    efficiently implemented by an FFT-based finite difference solver. Equation
    \eqref{eq:nonlinear} and \eqref{eq:secnonlin} can be decomposed into several
    scalar equations, each of which has a monotonic derivative so it can be
    solved efficiently by Newton's method.
\end{remark}

\section{The unconditional convergence of the solvers}
\label{sec:convergence}

We show the properties of the new solvers in this section, including
convergence, bound preserving, and mass conservation. First, we consider the
convergence. Here, we only consider the first-order scheme; the proofs for the
second-order scheme are similar, and thus we omit it here. We introduce some
lemmas first. The first lemma states the existence of the solution of the
optimization problem \eqref{eq:firstZ}.

\begin{lemma}
There exists a stationary point of the Lagrange function $\mathcal{L}$ defined
in \eqref{eq:largrange}.
\end{lemma}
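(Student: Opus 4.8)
The plan is to recognize that, once the redundancy introduced by the variable splitting is removed, a stationary point of $\mathcal{L}$ in \eqref{eq:largrange} is nothing but a saddle point of the energy $Z$ in \eqref{eq:firstZ} --- equivalently, a solution of the first-order scheme \eqref{eq:first-order} --- together with a pair of multipliers determined in closed form. Setting the six partial gradients of $\mathcal{L}$ to zero, the equations from $\partial_{u_3}$ and $\partial_{w_3}$ are the consensus constraints $u_1=u_2$ and $w_1=w_2$; writing $u:=u_1=u_2$ and $w:=w_1=w_2$, adding the pair of equations from $\partial_{u_1}$ and $\partial_{u_2}$ cancels $u_3$ and yields $w=-\epsilon^2\Delta_h u+\log(1+u)-\log(1-u)-\theta_0 u^n$, while adding the $\partial_{w_1}$ and $\partial_{w_2}$ equations cancels $w_3$ and yields $u-u^n=\tau\Delta_h w$; these are exactly \eqref{eq:first-order}. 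Conversely, from any solution $(u,w)$ of \eqref{eq:first-order} the multipliers are recovered explicitly as $u_3=\epsilon^2\Delta_h u+\alpha w$ and $w_3=(1-\alpha)(u-u^n)$, and a direct substitution confirms that all six stationarity equations then hold. Hence it suffices to prove that \eqref{eq:first-order}, equivalently the minimax problem \eqref{eq:optini}, possesses a solution whose grid values all lie in $(-1,1)$.

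I would obtain this solution from the convex--concave structure of $Z$. For fixed $u$ the map $w\mapsto Z(u,w)$ is concave, its $w$-dependence being the affine term $-\langle u-u^n,w\rangle$ plus the concave term $-\frac{\tau}{2}\|\nabla_h w\|_2^2$; since $\nabla_h$ annihilates constants, $\max_w Z(u,w)=+\infty$ unless the mass balance $\langle u-u^n,1\rangle=0$ holds, in which case the maximizing $w$ solves $\tau\Delta_h w=u-u^n$ on the mean-zero subspace, where $\Delta_h$ is invertible. Substituting this maximizer yields a reduced objective $g(u):=\max_w Z(u,w)$ that is strictly convex in $u$, since the entropy terms $\langle 1+u,\log(1+u)\rangle+\langle 1-u,\log(1-u)\rangle$ are strictly convex while $\|\nabla_h u\|_2^2$ and the resulting quadratic term in $u-u^n$ are convex; it is defined on the affine mass-balance hyperplane intersected with the open box in which every grid value of $u$ lies in $(-1,1)$.

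Finally I would show that $g$ attains an interior minimizer. Extending the entropy continuously to the closed box via $0\log 0=0$, $g$ is continuous on the compact set $K$ given by the closed box intersected with the mass-balance hyperplane, so a minimizer exists on $K$ and is unique by strict convexity. The main obstacle is to exclude boundary minimizers: the entropy itself stays bounded up to $u=\pm1$, but its derivative $\log\frac{1+u}{1-u}$ diverges to $\pm\infty$ there. Because $u^n$ takes values strictly in $(-1,1)$, its mean lies strictly inside $(-1,1)$, so no point of $K$ can have all components equal to $+1$ or all equal to $-1$; thus from any boundary point of $K$ there is a mean-zero (hence admissible) direction pushing a saturated component back into $(-1,1)$, along which the directional derivative of $g$ is $-\infty$, contradicting minimality. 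Therefore the minimizer $u^*$ satisfies $-1<u^*<1$, the corresponding $w^*=-\epsilon^2\Delta_h u^*+\log(1+u^*)-\log(1-u^*)-\theta_0 u^n$ is well defined, and combining the constrained optimality condition for $u^*$ with the maximizer formula for $w$ confirms that $(u^*,w^*)$ solves \eqref{eq:first-order}. Feeding $(u^*,w^*)$ into the closed-form multipliers above then produces the sought stationary point of $\mathcal{L}$. Alternatively, the unconditional unique solvability of \eqref{eq:first-order} established in \cite{chen2019positivity} may be invoked for this existence step, after which only the explicit construction of the multipliers remains.
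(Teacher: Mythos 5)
Your proof is correct, and its skeleton coincides with the paper's: identify stationary points of $\mathcal{L}$ in \eqref{eq:largrange} with solutions $(u^*,w^*)$ of the scheme \eqref{eq:first-order} via the explicit multipliers $u_3^*=\epsilon^2\Delta_h u^*+\alpha w^*$ and $w_3^*=(1-\alpha)(u^*-u^n)$, then verify the six stationarity equations. The paper's proof consists of exactly this verification, with the existence (and uniqueness, and pointwise bound $-1<u^*<1$) of the scheme solution taken wholesale from \cite{chen2019positivity} --- which is precisely the shortcut you offer in your final sentence. Where you go beyond the paper is in supplying a self-contained variational existence argument: maximizing out $w$ to get the mass constraint $\langle u-u^n,1\rangle=0$ and the $H^{-1}$-type quadratic, then minimizing the resulting strictly convex reduced functional on the compact slab and excluding boundary minimizers via the blow-up of $\log\frac{1+u}{1-u}$ at $u=\pm1$ (using that the mean of $u^n$ lies strictly in $(-1,1)$ so not all components can saturate on the same side). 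That argument is sound --- it is essentially the singular-potential solvability proof of \cite{chen2019positivity} reproduced in situ --- and it buys independence from the citation at the cost of length; the paper's version buys brevity at the cost of delegating the only nontrivial analytic content of the lemma to a reference.
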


\begin{proof}
    By the discussion in \cite{chen2019positivity}, for any $u^n\in\cper$ with
    $-1<u^n<1$ at a point-wise level, there exists a unique solution of the
    system \eqref{eq:first-order}, denoted as $(u^*,w^*)\in\cper\times\cper$
    with $-1<u^*<1$ at a point-wise level. That is,
    \begin{subequations}
        \begin{align}
        &u^*-u^n=\tau\Delta_h w^*,\\
        &w^*=-\epsilon^2\Delta_h u^* +\log(1+u^*)-\log(1-u^*)-\theta_0 u^n.
        \end{align}
    \end{subequations}
    Define $u_1^*,w_1^*,u_2^*,w_2^*,u_3^*,w_3^*\in\cper$ as
    \begin{subequations}
        \begin{align}
        &u_1^*=u^*,\quad w_1^*=w^*,\quad u_2^*=u^*,\quad w_2^*=w^*,\\
        &u_3^*=\epsilon^2\Delta_h u^*+\alpha w^*=\log(1+u^*)-\log(1-u^*)
        -\theta_0 u^*-(1-\alpha) w^*,\\
        &w_3^*=\alpha(-u^*+u^n)+\tau\Delta_h w^*=-(1-\alpha)(-u^*+u^n).
        \end{align}
    \end{subequations}
    Then, it holds that
    \begin{subequations}
        \begin{align}
        &-\epsilon^2\Delta_h u_1^*-\alpha w_1^*+u_3^*=0,\\
        &\alpha(-u_1^*+u^n)+\tau\Delta_h w_1^*-w_3^*=0,\\
        &\log(1+u_2^*)-\log(1-u_2^*)-\theta_0u^n-(1-\alpha)w_2^*-u_3^*=0,\\
        &(1-\alpha)(-u_2^*+u^n)+w_3^*=0,\\
        &u_1^*-u_2^*=0,\\
        &-w_1^*+w_2^*=0,
        \end{align}
    \end{subequations}
    which means $(u_1^*,w_1^*,u_2^*,w_2^*,u_3^*,w_3^*)$ is the stationary point
    of the function $\mathcal{L}$.
\end{proof}

In the following parts, for a fixed $u^n$, let $(u^*,w^*)$ be the solution of
the system \eqref{eq:first-order},
$\{(u_1^{(k)},w_1^{(k)},u_2^{(k)},w_2^{(k)},u_3^{(k)},w_3^{(k)})\}$ be the
sequence generated by Algorithm \ref{alg:first-order}, and
$(u_1^*,w_1^*,u_2^*,w_2^*,u_3^*,w_3^*)$ be the stationary point of $\mathcal{L}$
defined in \eqref{eq:largrange}. We define the sequence
\begin{equation}
\label{eq:error}
    \begin{aligned}
    &(e_{u1}^{(k)},e_{w1}^{(k)},e_{u2}^{(k)},e_{w2}^{(k)},e_{u3}^{(k)},
    e_{w3}^{(k)})\\
    &\hspace{20pt}=(u_1^{(k)},w_1^{(k)},u_2^{(k)},w_2^{(k)},u_3^{(k)},w_3^{(k)})
    -(u_1^*,w_1^*,u_2^*,w_2^*,u_3^*,w_3^*),\quad k\in\mathbb{N},
    \end{aligned}
\end{equation}
and the sequence $\{\Psi^{(k)}\}$ as
\begin{equation}
\label{eq:Psi}
    \Psi^{(k)}=\frac{1}{\rho_u}\|e_{u3}^{(k)}\|_2^2
    +\frac{1}{\rho_w}\|e_{w3}^{(k)}\|_2^2+\rho_u\|e_{u2}^{(k)}\|_2^2
    +\rho_w\|e_{w2}^{(k)}\|_2^2,\quad k\in\mathbb{N}.
\end{equation}
The following convergence analysis is similar to that of the classical ADMM
\cites{karabag2021deception, karabag2022alternating}.

\begin{lemma}
\label{lem:Psi}
    With the notations in \eqref{eq:error} and \eqref{eq:Psi}, it holds that,
    for $k\in\mathbb{N}$,
    \begin{equation}
        \begin{aligned}
        &\Psi^{(k)}-\Psi^{(k+1)}\geq
        \rho_u\|e_{u2}^{(k)}-e_{u2}^{(k+1)}\|_2^2
        +\rho_w\|e_{w2}^{(k)}-e_{w2}^{(k+1)}\|_2^2\\
        &\hspace{100pt} +\rho_u\|e_{u1}^{(k+1)}-e_{u2}^{(k+1)}\|_2^2
        +\rho_w\|e_{w1}^{(k+1)}-e_{w2}^{(k+1)}\|_2^2.
        \end{aligned}
    \end{equation}
\end{lemma}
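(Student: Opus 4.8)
The plan is to reproduce, in the convex--concave (minimax) setting, the classical ``sufficient decrease'' argument for ADMM, exploiting that the coupling between $u$ and $w$ is bilinear so that the troublesome cross terms cancel. First I would record the first-order (saddle) optimality conditions of the two subproblems. Differentiating $\mathcal{L}_{\rho_u,\rho_w}$ shows that the $u_1$- and $w_1$-equations are exactly \eqref{eq:linear} and the $u_2$- and $w_2$-equations are exactly \eqref{eq:nonlinear}. Using the multiplier update \eqref{eq:update}, I would rewrite the quantity $u_3^{(k)}+\rho_u(u_1^{(k+1)}-u_2^{(k)})$ appearing in \eqref{eq:linear} as $u_3^{(k+1)}+\rho_u(u_2^{(k+1)}-u_2^{(k)})$, and likewise for the $w$-block, so that every optimality relation is phrased through the multipliers at step $k+1$. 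Subtracting the stationary-point relations established in the proof of the preceding lemma then yields four error identities involving only $e_{u1}^{(k+1)},e_{w1}^{(k+1)},e_{u2}^{(k+1)},e_{w2}^{(k+1)},e_{u3}^{(k+1)},e_{w3}^{(k+1)}$ and the increments $e_{u2}^{(k)}-e_{u2}^{(k+1)}$, $e_{w2}^{(k)}-e_{w2}^{(k+1)}$.

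Next I would expand $\Psi^{(k)}-\Psi^{(k+1)}$. Applying the identity $\|a\|_2^2-\|b\|_2^2=\|a-b\|_2^2+2\langle b,a-b\rangle$ to each of the four terms of $\Psi$, and using the update in the form $e_{u3}^{(k+1)}-e_{u3}^{(k)}=\rho_u(e_{u1}^{(k+1)}-e_{u2}^{(k+1)})$ and $e_{w3}^{(k+1)}-e_{w3}^{(k)}=\rho_w(e_{w1}^{(k+1)}-e_{w2}^{(k+1)})$ (recall $u_1^*=u_2^*$, $w_1^*=w_2^*$, so these primal residuals equal $e_{u1}^{(k+1)}-e_{u2}^{(k+1)}$ etc.), I expect the four squared-norm terms on the right-hand side of the claim to emerge exactly, leaving a single cross term $T$. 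The lemma then reduces to showing $T\geq 0$, where $T$ gathers the inner products of $e_{u3}^{(k+1)},e_{w3}^{(k+1)}$ against the primal residuals and of $e_{u2}^{(k+1)},e_{w2}^{(k+1)}$ against the increments.

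The nonnegativity of $T$ rests on two monotonicity inequalities. The first compares the iterate at step $k+1$ with the stationary point: since $Z_1$ is convex in $u_1$ and concave in $w_1$, and $Z_2$ is convex in $u_2$ and linear in $w_2$, their saddle operators $(\nabla_{u}Z_i,-\nabla_{w}Z_i)$ are monotone. Substituting the error identities into these monotonicity relations, the bilinear coupling terms (carrying coefficients $\alpha$ and $1-\alpha$) cancel pairwise, and one is left with manifestly nonnegative contributions $\epsilon^2\|\nabla_h e_{u1}^{(k+1)}\|_2^2$, $\tau\|\nabla_h e_{w1}^{(k+1)}\|_2^2$, and the pointwise-monotone term $\langle L(u_2^{(k+1)})-L(u_2^{*}),e_{u2}^{(k+1)}\rangle\ge 0$ with $L(u)=\log(1+u)-\log(1-u)$. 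This produces a first inequality involving $e_{u1},e_{w1}$. The second monotonicity inequality compares the two \emph{consecutive} subproblem-2 solutions $(u_2^{(k)},w_2^{(k)})$ and $(u_2^{(k+1)},w_2^{(k+1)})$; because each satisfies its optimality condition (with the multiplier eliminated via \eqref{eq:update}), monotonicity of $Z_2$ gives exactly $-\rho_u\langle u_1^{(k+1)}-u_2^{(k+1)},\,e_{u2}^{(k)}-e_{u2}^{(k+1)}\rangle-\rho_w\langle w_1^{(k+1)}-w_2^{(k+1)},\,e_{w2}^{(k)}-e_{w2}^{(k+1)}\rangle\ge 0$. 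This is precisely the term needed to convert the $e_{u1},e_{w1}$ appearing in the first inequality into the $e_{u2},e_{w2}$ demanded by $T$; adding the two inequalities gives $T\ge 0$ and hence the claim.

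I expect the main obstacle to be the sign bookkeeping inherent to the minimax formulation: the $w$-blocks enter the monotone-operator inequalities with an overall minus sign (from maximization/concavity), and one must verify carefully that the $\alpha$- and $(1-\alpha)$-weighted coupling terms cancel so that what survives is genuinely nonnegative. The second, less obvious ingredient is recognizing that the consecutive-iterate monotonicity of the nonlinear subproblem is what supplies the missing cross term; I would also note that this step invokes the optimality of subproblem \eqref{eq:nonlinear} at step $k$, so it is valid for $k\geq 1$, with the index $k=0$ handled separately through the initialization (or absorbed as the first term of the telescoping sum used later in the convergence proof).
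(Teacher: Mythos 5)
Your proposal is correct and follows essentially the same route as the paper: the paper's proof consists precisely of the two monotonicity inequalities you describe (iterate-versus-stationary-point for both subproblems, giving the nonnegative terms $\epsilon^2\|\nabla_h e_{u1}^{(k+1)}\|_2^2$, $\tau\|\nabla_h e_{w1}^{(k+1)}\|_2^2$ and the logarithm term, plus consecutive-iterate monotonicity of the nonlinear subproblem), the cancellation of the $\alpha$- and $(1-\alpha)$-weighted coupling terms, and the algebraic identification of the resulting cross term with $\Psi^{(k)}-\Psi^{(k+1)}$ minus the four squared norms. Your closing caveat that the consecutive-iterate inequality requires the optimality of subproblem \eqref{eq:nonlinear} at step $k$, and hence strictly applies only for $k\geq 1$ (since $u_2^{(0)}=u^n$ is merely an initialization), is a valid observation that the paper's proof does not explicitly address.
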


\begin{proof}
    Since $-\Delta_h$ is positive semi-definite, we have
    \begin{equation}
    \label{eq:convex_1}
        \begin{aligned}
        0&\leq \langle -\epsilon^2\Delta_h(u_1^{(k+1)}-u_1^*),
        u_1^{(k+1)}-u_1^*\rangle\\
        &=\langle \alpha w_1^{(k+1)}-u_3^{(k)}-\rho_u(u_1^{(k+1)}-u_2^{(k)})
        -\alpha w_1^*-u_3^*, u_1^{(k+1)}-u_1^*\rangle\\
        &=\langle \alpha w_1^{(k+1)}-u_3^{(k+1)}+\rho_u(u_2^{(k)}-u_2^{(k+1)})
        -\alpha w_1^*-u_3^*, u_1^{(k+1)}-u_1^*\rangle\\
        &=\langle \alpha e_{w1}^{(k+1)}-e_{u3}^{(k+1)}
        +\rho_u(e_{u2}^{(k)}-e_{u2}^{(k+1)}), e_{u1}^{(k+1)}\rangle\\
        &=\langle -e_{u3}^{(k+1)}+\rho_u(e_{u2}^{(k)}-e_{u2}^{(k+1)}),
        e_{u1}^{(k+1)}\rangle+\alpha\langle e_{w1}^{(k+1)}, e_{u1}^{(k+1)}\rangle,
        \end{aligned}
    \end{equation}
    and
    \begin{equation}
    \label{eq:convex_2}
        \begin{aligned}
        0&\leq\langle -\tau\Delta_h (w_1^{(k+1)}-w_1^*),
        w_1^{(k+1)}-w_1^*\rangle\\
        &=\langle \alpha(-u_1^{(k+1)}+u^n)-w_3^{(k)}
        -\rho_w(w_1^{(k+1)}-w_2^{(k)})\\
        &\quad -\alpha(-u_1^*+u^n)+w_3^*,w_1^{(k+1)}-w_1^*\rangle\\
        &=\langle -\alpha u_1^{(k+1)}-w_3^{(k+1)}+\rho_w(w_2^{(k)}-w_2^{(k+1)})
        +\alpha u_1^*+w_3^*, w_1^{(k+1)}-w_1^*\rangle\\
        &=\langle -\alpha e_{u1}^{(k+1)}-e_{w3}^{(k+1)}
        +\rho_w(e_{w2}^{(k)}-e_{w2}^{(k+1)}), e_{w1}^{(k+1)}\rangle\\
        &=\langle -e_{w3}^{(k+1)}+\rho_w(e_{w2}^{(k)}-e_{w2}^{(k+1)}),
        e_{w1}^{(k+1)}\rangle-\alpha \langle  e_{u1}^{(k+1)}, e_{w1}^{(k+1)}
        \rangle.
        \end{aligned}
    \end{equation}

    By the property of the logarithmic function, we obtain that
    \begin{equation}
    \label{eq:convex}
        \begin{aligned}
        0&\leq \langle(\log(1+u_2^{(k+1)})-\log(1-u_2^{(k+1)}))\\
        &\quad-(\log(1+u_2^*)
        -\log(1-u_2^*)),u_2^{(k+1)}-u_2^* \rangle\\
        &\begin{aligned}
        &=\langle(\theta_0 u^n+(1-\alpha)w_2^{(k+1)}+u_3^{(k)}
        +\rho_u(u_1^{(k+1)}-u_2^{(k+1)}))\\
        &\quad -(\theta_0 u^n+(1-\alpha)w_2^*+u_3^*),
        u_2^{(k+1)}-u_2^*\rangle\end{aligned}\\
        &=\langle (1-\alpha)w_2^{(k+1)}+u_3^{(k+1)}
        -(1-\alpha)w_2^*-u_3^*,u_2^{(k+1)}-u_2^*\rangle\\
        &=\langle (1-\alpha)e_{w2}^{(k+1)}+e_{u3}^{(k+1)},e_{u2}^{(k+1)}\rangle\\
        &=\langle e_{u3}^{(k+1)},e_{u2}^{(k+1)}\rangle
        +(1-\alpha)\langle e_{w2}^{(k+1)},e_{u2}^{(k+1)}\rangle,
        \end{aligned}
    \end{equation}
    and
    {\small \begin{equation}
    \label{eq:convex_3}
        \begin{aligned}
        0&\leq \langle(\log(1+u_2^{(k)})-\log(1-u_2^{(k)}))\\
        &\hspace{20pt}-(\log(1+u_2^{(k+1)})
        -\log(1-u_2^{(k+1)})),u_2^{(k)}-u_2^{(k+1)} \rangle\\
        &=\langle(\theta_0 u^n+(1-\alpha)w_2^{(k)}+u_3^{(k-1)}+\rho_u(u_1^{(k)}-u_2^{(k)}))\\
        &\hspace{10pt}
        -(\theta_0 u^n+(1-\alpha)w_2^{(k+1)}+u_3^{(k)}+\rho_u(u_1^{(k+1)}-u_2^{(k+1)})),
        u_2^{(k)}-u_2^{(k+1)}\rangle\\
        &=\langle (1-\alpha)w_2^{(k)}+u_3^{(k)}-(1-\alpha)w_2^{(k+1)}-u_3^{(k)}\\
        &\hspace{20pt}-\rho_u(u_1^{(k+1)}-u_2^{(k+1)}),u_2^{(k)}-u_2^{(k+1)}\rangle\\
        &=\langle (1-\alpha)(e_{w2}^{(k)}-e_{w2}^{(k+1)})
        -\rho_u(e_{u1}^{(k+1)}-e_{u2}^{(k+1)}),e_{u2}^{(k)}-e_{u2}^{(k+1)}\rangle\\
        &=-\rho_u\langle e_{u1}^{(k+1)}-e_{u2}^{(k+1)},e_{u2}^{(k)}-e_{u2}^{(k+1)}\rangle
        +(1-\alpha)\langle e_{w2}^{(k)}-e_{w2}^{(k+1)},e_{u2}^{(k)}-e_{u2}^{(k+1)}\rangle.
        \end{aligned}
    \end{equation}}%
    We can also get that
    \begin{equation}
    \label{eq:convex_4}
        \begin{aligned}
        0&=\langle 0,w_2^{(k+1)}-w_2^*\rangle\\
        &=\langle ((1-\alpha)(-u_2^{(k+1)}+u^n)+w_3^{(k)}
        +\rho_w(w_1^{(k+1)}-w_2^{(k+1)}))\\
        &\hspace{100pt}-((1-\alpha)(-u_2^*+u^n)+w_3^*),w_2^{(k+1)}-w_2^*\rangle\\
        &=\langle -(1-\alpha)u_2^{(k+1)}+w_3^{(k+1)}+(1-\alpha)u_2^*-w_3^*,w_2^{(k+1)}
        -w_2^*\rangle\\
        &=\langle -(1-\alpha)e_{u2}^{(k+1)}+e_{w3}^{(k+1)}, e_{w2}^{(k+1)}\rangle\\
        &=\langle e_{w3}^{(k+1)}, e_{w2}^{(k+1)}\rangle
        -(1-\alpha)\langle e_{u2}^{(k+1)},e_{w2}^{(k+1)}\rangle,
        \end{aligned}
    \end{equation}
    and
    {\small \begin{equation}
    \label{eq:convex_5}
        \begin{aligned}
        0&=\langle 0,w_2^{(k)}-w_2^{(k+1)}\rangle\\
        &=\langle ((1-\alpha)(-u_2^{(k)}+u^n)+w_3^{(k-1)}+\rho_w(w_1^{(k)}-w_2^{(k)}))\\
        &\hspace{20pt}-(1-\alpha)(-u_2^{(k+1)}+u^n)+w_3^{(k+1)}
        +\rho_w(w_1^{(k+1)}-w_2^{(k+1)}),w_2^{(k)}-w_2^{(k+1)}\rangle\\
        &=\langle -(1-\alpha)u_2^{(k)}+w_3^{(k)}+(1-\alpha)u_2^{(k+1)}-w_3^{(k+1)}\\
        &\hspace{20pt}-\rho_w(w_1^{(k+1)}-w_2^{(k+1)}),w_2^{(k)}-w_2^{(k+1)}\rangle\\
        &=\langle -(1-\alpha)(e_{u2}^{(k)}-e_{u2}^{(k+1)})
        -\rho_w(e_{w1}^{(k+1)}-e_{w2}^{(k+1)}), e_{w2}^{(k)}-e_{w2}^{(k+1)}\rangle\\
        &=-\rho_w\langle e_{w1}^{(k+1)}-e_{w2}^{(k+1)}, e_{w2}^{(k)}-e_{w2}^{(k+1)}\rangle
        -(1-\alpha)\langle e_{u2}^{(k)}-e_{u2}^{(k+1)},e_{w2}^{(k)}-e_{w2}^{(k+1)}\rangle.
        \end{aligned}
    \end{equation}}%

    Summing up the six inequalities and equations above
    (from \eqref{eq:convex_1} to \eqref{eq:convex_5}), and multiplying both
    sides by $2$, we derive that
    {\small \begin{equation}
        \begin{aligned}
        0&\leq 2\langle -e_{u3}^{(k+1)}
        +\rho_u(e_{u2}^{(k)}-e_{u2}^{(k+1)}),e_{u1}^{(k+1)}\rangle\\
        &\hspace{10pt}+2\langle -e_{w3}^{(k+1)}+\rho_w(e_{w2}^{(k)}-e_{w2}^{(k+1)}),
        e_{w1}^{(k+1)}\rangle+2\langle e_{u3}^{(k+1)},e_{u2}^{(k+1)}\rangle
        +2\langle e_{w3}^{(k+1)}, e_{w2}^{(k+1)}\rangle\\
        &\hspace{10pt}
        -2\rho_u\langle e_{u1}^{(k+1)}-e_{u2}^{(k+1)},e_{u2}^{(k)}-e_{u2}^{(k+1)}\rangle
        -2\rho_w\langle e_{w1}^{(k+1)}-e_{w2}^{(k+1)},e_{w2}^{(k)}-e_{w2}^{(k+1)}\rangle\\
        &=2\rho_u\langle e_{u2}^{(k)}-e_{u2}^{(k+1)}, e_{u1}^{(k+1)}\rangle
        +2\rho_w\langle e_{w2}^{(k)}-e_{w2}^{(k+1)}, e_{w1}^{(k+1)}\rangle\\
        &\hspace{10pt}-2\langle e_{u3}^{(k+1)},e_{u1}^{(k+1)}-e_{u2}^{(k+1)}\rangle
        -2\langle e_{w3}^{(k+1)}, e_{w1}^{(k+1)}-e_{w2}^{(k+1)}\rangle\\
        &\hspace{10pt}
        -2\rho_u\langle e_{u1}^{(k+1)}-e_{u2}^{(k+1)},e_{u2}^{(k)}-e_{u2}^{(k+1)}\rangle
        -2\rho_w\langle e_{w1}^{(k+1)}-e_{w2}^{(k+1)},
        e_{w2}^{(k)}-e_{w2}^{(k+1)}\rangle\\
        &=2\rho_u\langle e_{u2}^{(k)}-e_{u2}^{(k+1)}, e_{u2}^{(k+1)}\rangle
        +2\rho_w\langle e_{w2}^{(k)}-e_{w2}^{(k+1)}, e_{w2}^{(k+1)}\rangle\\
        &\hspace{10pt}
        +\frac{2}{\rho_u}\langle e_{u3}^{(k+1)},e_{u3}^{(k)}-e_{u3}^{(k+1)}\rangle
        +\frac{2}{\rho_w}\langle e_{w3}^{(k+1)}, e_{w3}^{(k)}-e_{w3}^{(k+1)}\rangle\\
        &=\Psi^{(k)}-\Psi^{(k+1)}-\rho_u\|e_{u2}^{(k)}-e_{u2}^{(k+1)}\|_2^2
        -\rho_w\|e_{w2}^{(k)}-e_{w2}^{(k+1)}\|_2^2\\
        &\hspace{10pt}-\frac{1}{\rho_u}\|e_{u3}^{(k)}-e_{u3}^{(k+1)}\|_2^2
        -\frac{1}{\rho_w}\|e_{w3}^{(k)}-e_{w3}^{(k+1)}\|_2^2\\
        &=\Psi^{(k)}-\Psi^{(k+1)}-\rho_u\|e_{u2}^{(k)}-e_{u2}^{(k+1)}\|_2^2\\
        &\hspace{10pt}-\rho_w\|e_{w2}^{(k)}-e_{w2}^{(k+1)}\|_2^2-\rho_u\|e_{u1}^{(k+1)}
        -e_{u2}^{(k+1)}\|_2^2-\rho_w\|e_{w1}^{(k+1)}-e_{w2}^{(k+1)}\|_2^2.
        \end{aligned}
    \end{equation}}%
\end{proof}

The convergence of the iterative sequence can be obtained from the lemmas.
We can prove the following proposition.

\begin{proposition}
\label{prop:convergence}
    With the notations in \eqref{eq:error} and \eqref{eq:Psi}, it holds that
    \begin{equation}
        \|e_{u1}^{(k)}\|_2,\|e_{w1}^{(k)}\|_2,
        \|e_{u2}^{(k)}\|_2,\|e_{w2}^{(k)}\|_2,
        \|e_{u3}^{(k)}\|_2,\|e_{w3}^{(k)}\|_2\to 0,\quad \text{as }k\to\infty.
    \end{equation}
\end{proposition}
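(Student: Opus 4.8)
The plan is to use $\Psi^{(k)}$ from \eqref{eq:Psi} as a merit (Lyapunov) function and run the standard ADMM convergence machinery, adapted to the logarithmic nonlinearity. Since the right-hand side of the inequality in Lemma \ref{lem:Psi} is a sum of squared norms, $\{\Psi^{(k)}\}$ is monotonically non-increasing; as it is bounded below by $0$, it converges, and the telescoping sum $\sum_{k}(\Psi^{(k)}-\Psi^{(k+1)})$ is finite. Consequently every nonnegative term on the right of Lemma \ref{lem:Psi} tends to $0$; in particular the primal residuals $e_{u1}^{(k)}-e_{u2}^{(k)}=u_1^{(k)}-u_2^{(k)}$ and $e_{w1}^{(k)}-e_{w2}^{(k)}=w_1^{(k)}-w_2^{(k)}$ vanish, and the consecutive differences $e_{u2}^{(k)}-e_{u2}^{(k+1)}$, $e_{w2}^{(k)}-e_{w2}^{(k+1)}$ vanish. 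Feeding the residuals into the multiplier updates \eqref{eq:update} then gives $e_{u3}^{(k+1)}-e_{u3}^{(k)}\to 0$ and $e_{w3}^{(k+1)}-e_{w3}^{(k)}\to 0$.

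The second step establishes boundedness together with a uniform separation from the singularities. The convergence of $\Psi^{(k)}$ shows that $e_{u2}^{(k)}, e_{w2}^{(k)}, e_{u3}^{(k)}, e_{w3}^{(k)}$ are bounded, and the vanishing residuals then bound $e_{u1}^{(k)}, e_{w1}^{(k)}$; hence the whole iterate sequence is bounded in the finite-dimensional space $\cper^{6}$. The delicate point is that passing to a limit in the nonlinear subproblem \eqref{eq:nonlinear} requires the iterates $u_2^{(k)}$ to remain uniformly bounded away from $\pm 1$. This is where I would exploit the special structure: rewriting \eqref{eq:nonlinear} as $\log(1+u_2^{(k+1)})-\log(1-u_2^{(k+1)})$ equal to a right-hand side already controlled by the bounded quantities $u^n, w_2^{(k+1)}, u_3^{(k)}$ and the vanishing residual $u_1^{(k+1)}-u_2^{(k+1)}$, and using that $x\mapsto \log(1+x)-\log(1-x)=2\,\mathrm{artanh}(x)$ is a strictly increasing bijection of $(-1,1)$ onto $\mathbb{R}$ with inverse $\tanh$. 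A uniform bound $M$ on the right-hand side therefore forces $\|u_2^{(k)}\|_\infty\le \tanh(M/2)<1$ pointwise, uniformly in $k$.

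The third step extracts a subsequential limit and invokes uniqueness. By Bolzano--Weierstrass, choose a subsequence along which the full iterate converges, say to $\bar\xi=(\bar u_1,\bar w_1,\bar u_2,\bar w_2,\bar u_3,\bar w_3)$. Using the vanishing residuals and consecutive differences, the shifted iterate at index $k_j+1$ converges to the same point, so I can pass to the limit in the defining equations \eqref{eq:linear}--\eqref{eq:update}; the uniform separation from $\pm 1$ makes the logarithmic term continuous at the limit, so $\bar\xi$ satisfies exactly the six stationarity equations. I then argue this stationary point is unique: adding the two linear equations and adding the two nonlinear equations recovers precisely the first-order scheme \eqref{eq:first-order} for $(\bar u_2,\bar w_2)$, which has a unique solution $(u^*,w^*)$ by \cite{chen2019positivity}, while the multipliers $\bar u_3,\bar w_3$ are then pinned down explicitly. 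Hence $\bar\xi=(u_1^*,w_1^*,u_2^*,w_2^*,u_3^*,w_3^*)$, giving $\Psi^{(k_j)}\to 0$. Since $\{\Psi^{(k)}\}$ is monotone and convergent, $\Psi^{(k)}\to 0$, which forces $e_{u2}^{(k)},e_{w2}^{(k)},e_{u3}^{(k)},e_{w3}^{(k)}\to 0$; finally $e_{u1}^{(k)}$ and $e_{w1}^{(k)}$ vanish by adding back the residuals $u_1^{(k)}-u_2^{(k)}\to 0$ and $w_1^{(k)}-w_2^{(k)}\to 0$.

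The main obstacle I anticipate is precisely the singular logarithmic term: unlike the classical ADMM setting the objective is not globally Lipschitz, so one cannot blindly pass to limits. The crux is the uniform separation argument of the second step, which converts the a priori bound coming from $\Psi^{(k)}$ into a compactness statement strictly inside $(-1,1)$; together with the uniqueness of the stationary point (needed because $\Psi$ is measured against one fixed limit), this is what upgrades mere residual decay into genuine convergence of all six error sequences.
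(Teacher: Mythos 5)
Your proof follows essentially the same route as the paper's: boundedness of the iterates from Lemma \ref{lem:Psi}, extraction of a convergent subsequence via Bolzano--Weierstrass, identification of the subsequential limit with the stationary point of $\mathcal{L}$ by passing to the limit in \eqref{eq:linear}--\eqref{eq:update}, and then monotonicity of $\Psi^{(k)}$ to upgrade $\Psi^{(k_j)}\to 0$ into full convergence. Your second and third steps (the $\tanh$-based uniform separation of $u_2^{(k)}$ from $\pm 1$, and the explicit argument that summing the stationarity equations recovers \eqref{eq:first-order} and hence pins down a unique stationary point) carefully justify details that the paper's proof leaves implicit, but the overall strategy is identical.
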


\begin{proof}
    By Lemma \ref{lem:Psi}, the sequences
    $$
        \{u_2^{(k)}\},\ \{w_2^{(k)}\},\ \{u_3^{(k)}\},\ \{w_3^{(k)}\},\
        \{u_1^{(k)}-u_2^{(k)}\},\ \{w_1^{(k)}-w_2^{(k)}\}
    $$
    are bounded. Then, $\{u_1^{(k)}\},\{w_1^{(k)}\}$ are bounded. Therefore,
    there exists a subsequence
    $\{(u_1^{(k_\ell)},w_1^{(k_\ell)},u_2^{(k_\ell)},w_2^{(k_\ell)}$,
    $u_3^{(k_\ell)},w_3^{(k_\ell)})\}$ converging, and let the limit be
    $$
        (u_1^{(\infty)},w_1^{(\infty)},u_2^{(\infty)},w_2^{(\infty)},
        u_3^{(\infty)},w_3^{(\infty)}).
    $$
    Taking the limit along the subsequence in the equations
    \eqref{eq:linear}, \eqref{eq:nonlinear}, \eqref{eq:update}, we find that
    $$
        (u_1^{(\infty)},w_1^{(\infty)},u_2^{(\infty)},w_2^{(\infty)},
        u_3^{(\infty)},w_3^{(\infty)})
    $$
    is exactly the unique stationary point of the function $\mathcal{L}$, i.e.,
    \begin{equation}
        (u_1^{(\infty)},w_1^{(\infty)},u_2^{(\infty)},w_2^{(\infty)},
        u_3^{(\infty)},w_3^{(\infty)})
        =(u_1^*,w_1^*,u_2^*,w_2^*,u_3^*,w_3^*).
    \end{equation}
    Since the sequence $\{\Psi^{(k)}\}$ is monotonically decreasing and has a
    subsequence $\{\Psi^{(k_\ell)}\}$ converging to $0$, the sequence
    $\{\Psi^{(k)}\}$ converges to $0$ itself. Then, it holds that, as
    $k\to\infty$,
    \begin{equation}
        \|e_{u1}^{(k)}-e_{u2}^{(k)}\|_2,\|e_{w1}^{(k)}-e_{w2}^{(k)}\|_2,
        \|e_{u2}^{(k)}\|_2,\|e_{w2}^{(k)}\|_2,
        \|e_{u3}^{(k)}\|_2,\|e_{w3}^{(k)}\|_2\to 0.
    \end{equation}
    Utilizing
    \begin{equation}
         \|e_{u1}^{(k)}\|_2\leq
         \|e_{u1}^{(k)}-e_{u2}^{(k)}\|_2+\|e_{u2}^{(k)}\|_2,\quad
         \|e_{w1}^{(k)}\|_2\leq
         \|e_{w1}^{(k)}-e_{w2}^{(k)}\|_2+\|e_{w2}^{(k)}\|_2,
    \end{equation}
    concludes the proof.
\end{proof}

From the algorithm, we can see that the bound-preserving property is satisfied.
It holds that $-1<u_2^{(k)}<1$ at a point-wise level for all $k\in\mathbb{N}$
because of the logarithmic term.
We only need $u_2^{(k)}$ to be a correct approach to the solution $u^{n+1}$. The
following theorem follows directly from the proposition.

\begin{theorem}
    Let $\{u_2^{(k)}\}$ be the iterative sequence generated by Algorithm
    \ref{alg:first-order}, it holds that
    \begin{equation}
        \lim_{k\to\infty}\|u_2^{(k)}-u^{n+1}\|_2=0,
    \end{equation}
    where $u^{n+1}$ is the solution of the first-order convex splitting scheme
    at $t_{n+1}$.
\end{theorem}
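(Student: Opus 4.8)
The plan is to obtain the statement almost immediately from Proposition \ref{prop:convergence}, once the stationary point of $\mathcal{L}$ is correctly identified with the scheme solution. First I would recall the explicit construction carried out in the proof of the existence lemma: there the stationary point $(u_1^*,w_1^*,u_2^*,w_2^*,u_3^*,w_3^*)$ of $\mathcal{L}$ is built from the unique solution $(u^*,w^*)$ of the first-order convex splitting scheme \eqref{eq:first-order} by setting $u_1^*=u_2^*=u^*$ and $w_1^*=w_2^*=w^*$. Since $u^{n+1}$ is, by definition, exactly this solution component $u^*$ of \eqref{eq:first-order} at $t_{n+1}$, we get the key identity $u_2^*=u^{n+1}$.

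Next I would invoke Proposition \ref{prop:convergence}, which already establishes that every error component converges to zero as $k\to\infty$; in particular $\|e_{u2}^{(k)}\|_2\to 0$. Recalling the definition of the error sequence in \eqref{eq:error}, namely $e_{u2}^{(k)}=u_2^{(k)}-u_2^*$, and substituting the identity $u_2^*=u^{n+1}$, I obtain
\[
\|u_2^{(k)}-u^{n+1}\|_2=\|e_{u2}^{(k)}\|_2\longrightarrow 0\quad\text{as }k\to\infty,
\]
which is precisely the asserted convergence.

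There is essentially no remaining analytic obstacle, since the monotone descent estimate of Lemma \ref{lem:Psi} and the subsequence-plus-limit argument in Proposition \ref{prop:convergence} already carry the full weight of the proof. The one point that must be handled carefully is the identification $u_2^*=u^{n+1}$: it rests on the uniqueness of the scheme solution $(u^*,w^*)$ established in \cite{chen2019positivity} and on the uniqueness of the stationary point of $\mathcal{L}$ (the latter being exactly what Proposition \ref{prop:convergence} uses to pin the limit of the convergent subsequence to $(u_1^*,w_1^*,u_2^*,w_2^*,u_3^*,w_3^*)$). Once these uniqueness facts are in place, the matching of the $u_2$-component of the stationary point with $u^{n+1}$ is a direct consequence of the construction, and the theorem follows with no further computation.
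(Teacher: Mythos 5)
Your proposal is correct and follows exactly the route the paper intends: the paper simply states that the theorem ``follows directly from the proposition,'' and your argument fills in precisely that reasoning --- the stationary point constructed in the existence lemma satisfies $u_2^*=u^*=u^{n+1}$, and Proposition \ref{prop:convergence} gives $\|e_{u2}^{(k)}\|_2=\|u_2^{(k)}-u^{n+1}\|_2\to 0$. No gap; this is the same approach, just spelled out more explicitly than the paper bothers to.
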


Below, we examine how the mass of the numerical solution changes and demonstrate that this algorithm nearly preserves mass conservation.

\begin{proposition}\label{Pro1}
    With the notations in \eqref{eq:error},\eqref{eq:Psi} and \eqref{eq:mass},
    it holds that
    \begin{equation}
        |\mathcal{M}(u_2^{(k+1)})-\mathcal{M}(u^n)|
        \leq\alpha\|u_1^{(k+1)}-u_2^{(k+1)}\|_2+\rho_w\|w_2^{(k)}-w_2^{(k+1)}\|_2,
    \end{equation}
    where the average mass $\mathcal{M}$ of $u \in\cper$ is defined as
    \begin{equation}
        \label{eq:mass}
        \mathcal{M}(u)=\langle u,1\rangle.
    \end{equation}
\end{proposition}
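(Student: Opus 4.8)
The plan is to reduce the mass difference to pairings against the constant grid function $1\in\cper$ and to exploit that the discrete Laplacian annihilates constants. Indeed, the discrete summation-by-parts formula gives $\langle\Delta_h\nu,1\rangle=-\langle\nabla_h\nu,\nabla_h 1\rangle=0$ for every $\nu\in\cper$, since $\nabla_h 1=0$. Since $\mathcal M(v)=\langle v,1\rangle$ is linear, I would test the two ``$w$-equations'' \eqref{eq:linear_2} and \eqref{eq:nonlinear_2} against $1$. In \eqref{eq:linear_2} the term $\tau\langle\Delta_h w_1^{(k+1)},1\rangle$ drops out, while \eqref{eq:nonlinear_2} carries no Laplacian at all. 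This is the step that makes the whole argument work: it removes the only contributions that are not otherwise controllable and keeps the Lagrange multiplier $w_3^{(k)}$ and the term $w_1^{(k+1)}$ in a form that will cancel.

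Testing the two equations against $1$ yields the scalar identities
\begin{align}
&\alpha\big(\mathcal M(u^n)-\mathcal M(u_1^{(k+1)})\big)-\mathcal M(w_3^{(k)})-\rho_w\big(\mathcal M(w_1^{(k+1)})-\mathcal M(w_2^{(k)})\big)=0,\\
&(1-\alpha)\big(\mathcal M(u^n)-\mathcal M(u_2^{(k+1)})\big)+\mathcal M(w_3^{(k)})+\rho_w\big(\mathcal M(w_1^{(k+1)})-\mathcal M(w_2^{(k+1)})\big)=0.
\end{align}
Adding them cancels both $\mathcal M(w_3^{(k)})$ and the $\rho_w\mathcal M(w_1^{(k+1)})$ contribution, leaving only $\rho_w\big(\mathcal M(w_2^{(k)})-\mathcal M(w_2^{(k+1)})\big)$ from the multiplier side. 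Writing $\mathcal M(u^n)=\alpha\mathcal M(u^n)+(1-\alpha)\mathcal M(u^n)$ and rearranging then produces the exact identity
\begin{equation}
\mathcal M(u_2^{(k+1)})-\mathcal M(u^n)=\alpha\langle u_2^{(k+1)}-u_1^{(k+1)},1\rangle+\rho_w\langle w_2^{(k)}-w_2^{(k+1)},1\rangle.
\end{equation}

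From here the estimate is routine bookkeeping: taking absolute values, applying the triangle inequality, and using the Cauchy--Schwarz inequality $|\langle v,1\rangle|\le\|1\|_2\,\|v\|_2$ on each of the two inner products gives the claimed bound (up to the fixed geometric factor $\|1\|_2$, which I expect to be absorbed into the stated constants). The only delicate point is the cancellation of the $w_3^{(k)}$ and $w_1^{(k+1)}$ terms upon summation; everything else is linear. As a byproduct, since Proposition~\ref{prop:convergence} forces $\|u_1^{(k+1)}-u_2^{(k+1)}\|_2\to0$ and $\|w_2^{(k)}-w_2^{(k+1)}\|_2\to0$, the right-hand side vanishes as $k\to\infty$, which is exactly the sense in which the iteration ``nearly'' conserves mass and recovers exact conservation in the limit.
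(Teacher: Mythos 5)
Your proof is correct and follows essentially the same route as the paper's: sum (equivalently, test against $1$) the two $w$-equations \eqref{eq:linear_2} and \eqref{eq:nonlinear_2}, use $\langle\Delta_h w_1^{(k+1)},1\rangle=0$, and bound the resulting exact identity for $\mathcal{M}(u_2^{(k+1)})-\mathcal{M}(u^n)$. The only difference is the final estimate: the paper passes through $|\mathcal{M}(v)|\le\|v\|_1\le\|v\|_2$ instead of Cauchy--Schwarz, which hides the same normalization factor $\|1\|_2$ that you explicitly flagged.
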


\begin{proof}
    Summing up the two equations \eqref{eq:linear_2} and \eqref{eq:nonlinear_2},
    we have
    \begin{equation}
        u_2^{(k+1)}-u^n=\tau\Delta_h w_1^{(k+1)}-\alpha(u_1^{(k+1)}-u_2^{(k+1)})
        +\rho_w(w_2^{(k)}-w_2^{(k+1)}).
    \end{equation}
    Since $\mathcal{M}$ is a linear operator, we can get that
    \begin{equation}
        \begin{aligned}
        &|\mathcal{M}(u_2^{(k+1)})-\mathcal{M}(u^n))|
        =|\mathcal{M}(u_2^{(k+1)}-u^n)|\\
        &=|\mathcal{M}(\tau\Delta_h w_1^{(k+1)}-\alpha(u_1^{(k+1)}-u_2^{(k+1)})
        +\rho_w(w_2^{(k)}-w_2^{(k+1)})|\\
        &=|\tau\mathcal{M}(\Delta_h w_1^{(k+1)})
        -\alpha\mathcal{M}(u_1^{(k+1)}-u_2^{(k+1)})
        +\rho_w\mathcal{M}(w_2^{(k)}-w_2^{(k+1)})|\\
        &=|-\alpha\mathcal{M}(u_1^{(k+1)}-u_2^{(k+1)})
        +\rho_w\mathcal{M}(w_2^{(k)}-w_2^{(k+1)})|\\
        &\leq \alpha\|u_1^{(k+1)}-u_2^{(k+1)}\|_1+\rho_w\|w_2^{(k)}-w_2^{(k+1)}\|_1\\
        &\leq \alpha\|u_1^{(k+1)}-u_2^{(k+1)}\|_2+\rho_w\|w_2^{(k)}-w_2^{(k+1)}\|_2.
        \end{aligned}
    \end{equation}
\end{proof}

\begin{remark}
    In the framework of ADMM, a reasonable convergence criterion is
    {\small \begin{equation}
    \label{eq:criterion}
        \|u_1^{(k+1)}-u_2^{(k+1)}\|_2+\|w_1^{(k+1)}-w_2^{(k+1)}\|_2
        +\|u_1^{(k)}-u_1^{(k+1)}\|_2+\|w_1^{(k)}-w_1^{(k+1)}\|_2\leq \gamma,
    \end{equation}}%
    where $\gamma>0$ is the stopping tolerance. Proposition \ref{Pro1} indicates
    that the magnitude of mass variation is on the same order as the stopping
    tolerance.
\end{remark}


The solver for the second-order scheme has similar convergence and mass
conservation properties.

\begin{theorem}
    Let $\{u_2^{(k)}\}$ be the iterative sequence generated by Algorithm
    \ref{alg:second-order}, it holds that
    \begin{equation}
        \lim_{k\to\infty}\|u_2^{(k)}-u^{n+1}\|_2=0,
    \end{equation}
    where $u^{n+1}$ is the solution of the second-order convex splitting scheme
    at the next time step.
\end{theorem}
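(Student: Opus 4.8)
The plan is to transplant the entire first-order convergence machinery---Lemma \ref{lem:Psi} together with Proposition \ref{prop:convergence}---onto the second-order scheme, since Algorithm \ref{alg:second-order} has exactly the same ADMM skeleton and differs only in the fixed data and in two positive diffusion coefficients. First I would record the analog of the existence lemma: by \cite{chen2019positivity}, for given $u^n,u^{n-1}\in\cper$ with $-1<u^n,u^{n-1}<1$ pointwise, the second-order scheme \eqref{eq:second-order} has a unique solution $(u^*,w^*)$ with $-1<u^*<1$; setting $u_1^*=u_2^*=u^*$, $w_1^*=w_2^*=w^*$ and defining the multipliers $u_3^*,w_3^*$ exactly as in the first-order construction, but reading the data off \eqref{eq:secline}--\eqref{eq:secnonlin}, produces a stationary point of the second-order Lagrange function.

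Next I would define the error sequence as in \eqref{eq:error} and the Lyapunov quantity $\Psi^{(k)}$ verbatim as in \eqref{eq:Psi}, and prove the second-order analog of Lemma \ref{lem:Psi}. The decisive observation is that every structural ingredient survives the modification: the composite operator $(-\epsilon^2-A\tau\theta_0^2)\Delta_h$ is still positive semidefinite because $-\Delta_h$ is and the coefficient $\epsilon^2+A\tau\theta_0^2$ is positive; the time operator $-\tfrac{2\tau}{3}\Delta_h$ is still positive semidefinite; and the monotonicity of $u\mapsto\log(1+u)-\log(1-u)$ is unchanged. Crucially, the extra inhomogeneous terms---the stabilization contribution $A\tau\theta_0^2\Delta_h u^n$ in \eqref{eq:secline} and the data $\theta_0(2u^n-u^{n-1})$, $\tfrac{4}{3}u^n-\tfrac{1}{3}u^{n-1}$ appearing throughout \eqref{eq:secline}--\eqref{eq:secnonlin}---are fixed across iterations and appear identically in the stationary-point equations, so they cancel when I subtract and pass to the error variables. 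Consequently the analogs of \eqref{eq:convex_1}--\eqref{eq:convex_5} hold verbatim, and summing them yields the same telescoping estimate $\Psi^{(k)}-\Psi^{(k+1)}\geq\rho_u\|e_{u2}^{(k)}-e_{u2}^{(k+1)}\|_2^2+\cdots$.

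With the monotone decrease of $\Psi^{(k)}$ in hand, I would repeat Proposition \ref{prop:convergence} line for line: the summand terms force boundedness of $\{u_2^{(k)}\},\{w_2^{(k)}\},\{u_3^{(k)}\},\{w_3^{(k)}\}$ and of $\{u_1^{(k)}-u_2^{(k)}\},\{w_1^{(k)}-w_2^{(k)}\}$, hence of $\{u_1^{(k)}\},\{w_1^{(k)}\}$; a convergent subsequence has a limit that, by continuity of the maps defined by \eqref{eq:secline}--\eqref{eq:secnonlin} and the multiplier update, must be the unique stationary point; monotonicity of $\Psi$ then upgrades the subsequential limit $\Psi^{(k_\ell)}\to 0$ to $\Psi^{(k)}\to 0$; and the triangle inequality $\|e_{u1}^{(k)}\|_2\leq\|e_{u1}^{(k)}-e_{u2}^{(k)}\|_2+\|e_{u2}^{(k)}\|_2$ delivers convergence of every error component. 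In particular $\|u_2^{(k)}-u_2^*\|_2=\|e_{u2}^{(k)}\|_2\to 0$, and since $u_2^*=u^*=u^{n+1}$ this is precisely the claim.

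The only genuine obstacle is bookkeeping rather than mathematics: I must confirm that the two coefficient changes preserve positive semidefiniteness and that each inhomogeneous term really cancels in the difference between the iterate equations and the stationary-point equations. Because none of these alterations touches the sign structure exploited in Lemma \ref{lem:Psi}, no new hypothesis---no time-step restriction and no strict separation property---is required, and the unconditional convergence carries over intact.
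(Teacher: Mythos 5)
Your proposal is correct and is essentially the paper's own argument: the paper omits the proof of this theorem entirely, stating only that it is ``similar'' to the first-order case, and your write-up is precisely that analogy carried out --- verifying that the positive semidefiniteness of $-(\epsilon^2+A\tau\theta_0^2)\Delta_h$ and $-\tfrac{2\tau}{3}\Delta_h$, the monotonicity of the logarithmic term, and the cancellation of the fixed data ($A\tau\theta_0^2\Delta_h u^n$, $\theta_0(2u^n-u^{n-1})$, $\tfrac{4}{3}u^n-\tfrac{1}{3}u^{n-1}$) in the error equations let Lemma \ref{lem:Psi} and Proposition \ref{prop:convergence} go through verbatim. No gaps; if anything, your version supplies the bookkeeping the paper leaves implicit.
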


\begin{proposition}
    Let $\{(u_1^{(k)},w_1^{(k)},u_2^{(k)},w_2^{(k)})\}$ be the sequence
    generated by Algorithm \ref{alg:second-order}.
    With the notations in \eqref{eq:mass},
    it holds that
    \begin{equation}
        |\mathcal{M}(u_2^{(k+1)})-\mathcal{M}(u^n)|
        \leq\alpha\|u_1^{(k+1)}-u_2^{(k+1)}\|_2+\rho_w\|w_2^{(k)}-w_2^{(k+1)}\|_2.
    \end{equation}
\end{proposition}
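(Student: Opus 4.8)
The plan is to reproduce the argument of Proposition~\ref{Pro1} almost verbatim, the only genuinely new feature being the BDF2-type combination $\tfrac{4}{3}u^n-\tfrac{1}{3}u^{n-1}$ that now plays the role formerly played by $u^n$. First I would add the second equation of \eqref{eq:secline} to the second equation of \eqref{eq:secnonlin}. The multiplier $w_3^{(k)}$ cancels, the convex-splitting weights recombine through $\alpha+(1-\alpha)=1$, and the two $\rho_w$-terms telescope to $\rho_w(w_2^{(k)}-w_2^{(k+1)})$; writing $-\alpha u_1^{(k+1)}-(1-\alpha)u_2^{(k+1)}=-u_2^{(k+1)}-\alpha(u_1^{(k+1)}-u_2^{(k+1)})$ then yields the single identity
\begin{equation}
u_2^{(k+1)}-\left(\tfrac{4}{3}u^n-\tfrac{1}{3}u^{n-1}\right)
=\tfrac{2\tau}{3}\Delta_h w_1^{(k+1)}-\alpha\bigl(u_1^{(k+1)}-u_2^{(k+1)}\bigr)
+\rho_w\bigl(w_2^{(k)}-w_2^{(k+1)}\bigr),
\end{equation}
the exact second-order analogue of the identity used in Proposition~\ref{Pro1}.

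Next I would apply the linear functional $\mathcal{M}$ and use $\mathcal{M}(\Delta_h w_1^{(k+1)})=0$, which follows from the summation-by-parts formula together with $\nabla_h 1=0$ (the discrete Laplacian of a periodic grid function has vanishing mean). This removes the diffusion term and leaves
\begin{equation}
\mathcal{M}(u_2^{(k+1)})-\mathcal{M}(u^n)
=\tfrac{1}{3}\bigl(\mathcal{M}(u^n)-\mathcal{M}(u^{n-1})\bigr)
-\alpha\,\mathcal{M}\bigl(u_1^{(k+1)}-u_2^{(k+1)}\bigr)
+\rho_w\,\mathcal{M}\bigl(w_2^{(k)}-w_2^{(k+1)}\bigr).
\end{equation}
The main obstacle, absent in the first-order case, is the residual $\tfrac{1}{3}(\mathcal{M}(u^n)-\mathcal{M}(u^{n-1}))$: the stated estimate holds only once this term is shown to vanish. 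I would dispose of it through the mass-conservation invariant $\mathcal{M}(u^n)=\mathcal{M}(u^{n-1})$, which propagates by induction on $n$, since applying $\mathcal{M}$ to \eqref{eq:second-order} gives $3\mathcal{M}(u^{n+1})-4\mathcal{M}(u^n)+\mathcal{M}(u^{n-1})=0$, whence $\mathcal{M}(u^{n+1})=\mathcal{M}(u^n)$ whenever $\mathcal{M}(u^n)=\mathcal{M}(u^{n-1})$; the base case is supplied by the initialization of the multistep scheme.

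Once the residual term is annihilated, the proof closes exactly as for the first-order scheme: taking absolute values, invoking linearity of $\mathcal{M}$ together with $|\mathcal{M}(v)|=|\langle v,1\rangle|\le\|v\|_1$, and finally bounding $\|v\|_1\le\|v\|_2$ on each of the two remaining contributions delivers the claimed inequality. The only care needed beyond transcription from Proposition~\ref{Pro1} is keeping track of the $\tfrac{2\tau}{3}$ and $\tfrac{4}{3},\tfrac{1}{3}$ coefficients and ensuring the BDF2 residual is genuinely removed by the mass invariant.
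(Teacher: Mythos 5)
Your argument is correct and follows exactly the route the paper intends: the paper gives no separate proof for the second-order case, deferring to Proposition \ref{Pro1}, and your summation of the two second equations of \eqref{eq:secline} and \eqref{eq:secnonlin}, the cancellation of $w_3^{(k)}$, the annihilation of $\mathcal{M}(\Delta_h w_1^{(k+1)})$ via summation by parts, and the final $\|\cdot\|_1\le\|\cdot\|_2$ bound are precisely the intended steps. The one genuinely new point you raise---that the identity naturally controls $|\mathcal{M}(u_2^{(k+1)})-\mathcal{M}(\tfrac43 u^n-\tfrac13 u^{n-1})|$, so the stated inequality needs the BDF2 residual $\tfrac13(\mathcal{M}(u^n)-\mathcal{M}(u^{n-1}))$ to vanish---is a real feature the paper glosses over, and your inductive disposal of it via $3\mathcal{M}(u^{n+1})-4\mathcal{M}(u^n)+\mathcal{M}(u^{n-1})=0$ is sound for exact time-level solutions; just be aware that since $u^n,u^{n-1}$ are in practice ADMM outputs truncated at finite tolerance, consecutive masses agree only up to that tolerance, so strictly the bound should either carry the extra term $\tfrac13|\mathcal{M}(u^n)-\mathcal{M}(u^{n-1})|$ or be stated relative to $\mathcal{M}(\tfrac43 u^n-\tfrac13 u^{n-1})$.
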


In order to keep the variation in mass as small as possible, a sufficiently
small stopping tolerance is required. After satisfying the iteration
termination criterion \eqref{eq:criterion}, we finally take $u_2^{(k)}$ of the
current iteration step as the numerical approach to $u^{n+1}$ to complete the
algorithm. 
\section{Numerical examples}

\label{sec:numerical}

In this section, we provide several numerical examples. Section
\ref{sec:numerical_1} presents a convergence test of the convex splitting
discrete scheme to verify the accuracy of our solver. Numerical simulations of
coarsening processes in 2D and 3D are demonstrated in Section
\ref{sec:numerical_2} and Section \ref{sec:numerical_3}, respectively,
illustrating the robustness of our solver.

\subsection{Convergence test for the discrete scheme}

\label{sec:numerical_1}

In this numerical examples, we use a slightly different formulation of
the logarithmic energy potential
{\small \begin{equation}
    \label{eq:potential}
    E(u)=\int_\Omega\left(\frac{1}{2\theta_0}((1+u)\log(1+u)+(1-u)\log(1-u)
    -\frac{1}{2}(u^2-1))+\frac{\epsilon^2}{2}|\nabla u|\right)
\end{equation}}%
as in the numerical examples of \cite{chen2019positivity}. This change is only
technical in order to make comparisons with numerical results in the literature.
The energy potential \eqref{eq:potential} is equivalent to the
\eqref{eq:flory-huggins} with some modified parameters.

The setting of this example is the same as the asymptotic convergence test in
\cite{chen2019positivity}. The initial condition is
\begin{equation}
    u_0(x,y)=1.8\left(\frac{1-\cos\left(\frac{4x\pi}{3.2}\right)}{2}\right)
    \left(\frac{1-\cos\left(\frac{4y\pi}{3.2}\right)}{2}\right)-0.9.
\end{equation}
The ``Cauchy difference'' $\delta_u$ is computed between approximate solutions
obtained with successively finer mesh sizes to compute the convergence rate.
The parameters are
\begin{equation}
    L=3.2,\ \epsilon=0.2,\ \theta_0=3,\ T=0.4.
\end{equation}
The refinement path for the first-order scheme is quadratic,
$\tau=0.4h^2$, and the refinement path for the second-order scheme is linear,
$\tau=0.8h$. The stabilized parameter of the second-order scheme $A$ is set as
$1/16$. The test results, displayed in Table \ref{tab:res}, confirm the
predicted accuracy: first order in time and second order in space for the
first-order scheme, and second order in both time and space for the second-order
scheme.

The coefficients in the solver are
\begin{equation}
    \rho_u=\tau^{-1/2},\quad \rho_w=\tau^{1/2},\quad \alpha=1/2.
\end{equation}
The convergence criterion is set as
\begin{equation}
    \sqrt{h^2\|u_1^{(k)}-u_2^{(k)}\|_2^2+h^2\|w_1^{(k)}-w_2^{(k)}\|_2^2}\leq
    \gamma=10^{-10}.
\end{equation}

The number of iterations, the change in total mass, the energy, $1-\max(u)$ and
$1+\min(u)$ over time are given in Figure \ref{fig:other1}, Figure
\ref{fig:other2}, Figure \ref{fig:other3} and Figure \ref{fig:other4}, which
confirm the energy stable and the bound preserving properties.

\begin{table}[!t]
\centering
\begin{tabular}{|c|c|cc|ccc|}
\hline
\multicolumn{2}{|c|}{} & \multicolumn{2}{c|}{First order scheme} &
\multicolumn{2}{c|}{Second order scheme}\\\hline
$h_c$      & $h_f$     & \multicolumn{1}{c|}{$\|\delta_u\|_2$} & Rate  &
\multicolumn{1}{c|}{$\|\delta_u\|_2$} & \multicolumn{1}{c|}{Rate} \\ \hline
$3.2/16$          & $3.2/32$  & \multicolumn{1}{c|}{5.38E$-$1}      & -     &
\multicolumn{1}{c|}{4.89E$-$2}      & \multicolumn{1}{c|}{-}        \\ \hline
$3.2/32$          & $3.2/64$  & \multicolumn{1}{c|}{1.55E$-$2}      & 1.794 &
\multicolumn{1}{c|}{3.16E$-$2}      & \multicolumn{1}{c|}{0.630}  \\ \hline
$3.2/64$          & $3.2/128$ & \multicolumn{1}{c|}{4.02E$-$3}      & 1.948 &
\multicolumn{1}{c|}{6.33E$-$3}      & \multicolumn{1}{c|}{2.320}  \\ \hline
$3.2/128$         & $3.2/256$ & \multicolumn{1}{c|}{1.01E$-$3}      & 1.987 &
\multicolumn{1}{c|}{1.41E$-$3}      & \multicolumn{1}{c|}{2.163}  \\ \hline
$3.2/256$         & $3.2/512$ & \multicolumn{1}{c|}{2.54E$-$4}  & 1.997     &
\multicolumn{1}{c|}{3.35E$-$4}      & \multicolumn{1}{c|}{2.079}  \\ \hline
\end{tabular}
\caption{Errors and convergence rates. $h_c$ is the coarse mesh size, $h_f$ is
the fine mesh size.}
    \label{tab:res}
\end{table}

\begin{figure}[!t]
    \centering
    \includegraphics[width=0.5\linewidth]{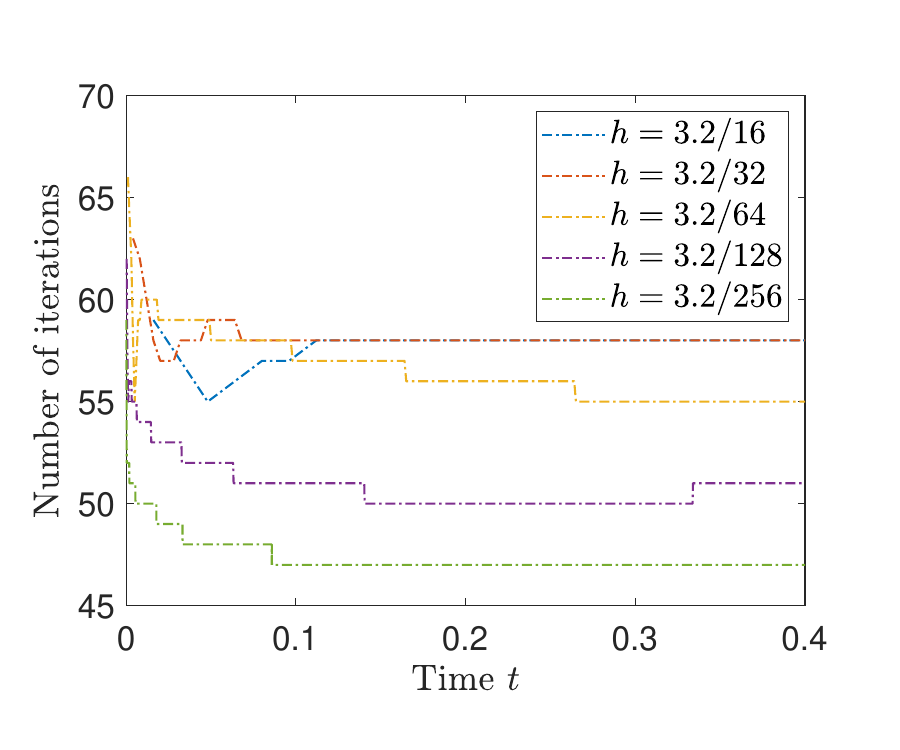}%
    \includegraphics[width=0.5\linewidth]{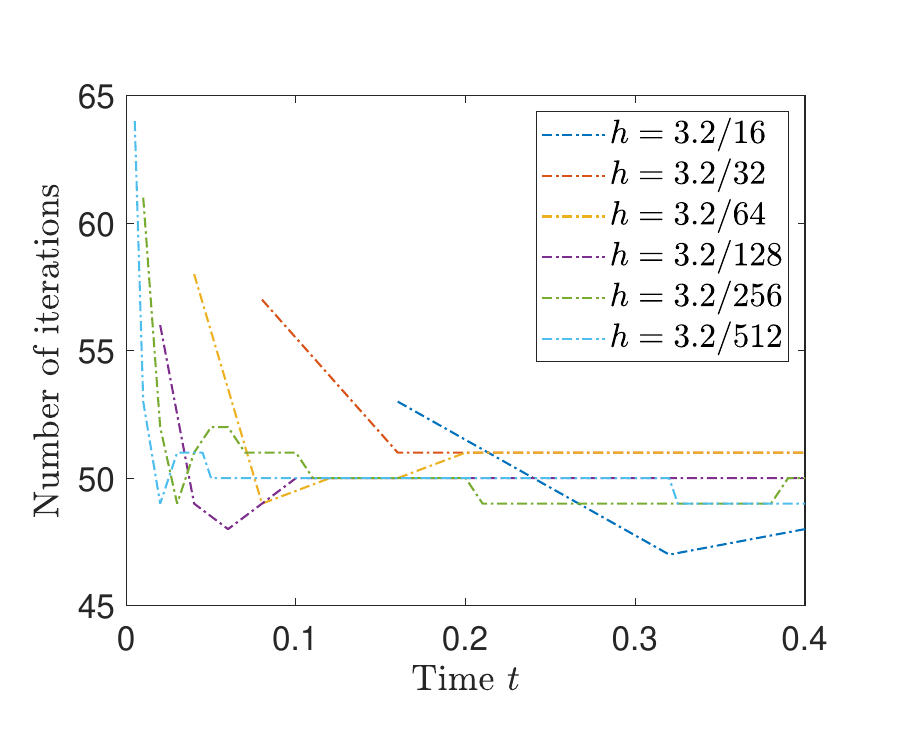}
\caption{(Convergence test)
The number of iterations over time $t$. Left: first-order scheme,
right: second-order scheme.}
\label{fig:other1}
\end{figure}

\begin{figure}[!t]
    \centering
    \includegraphics[width=0.5\linewidth]{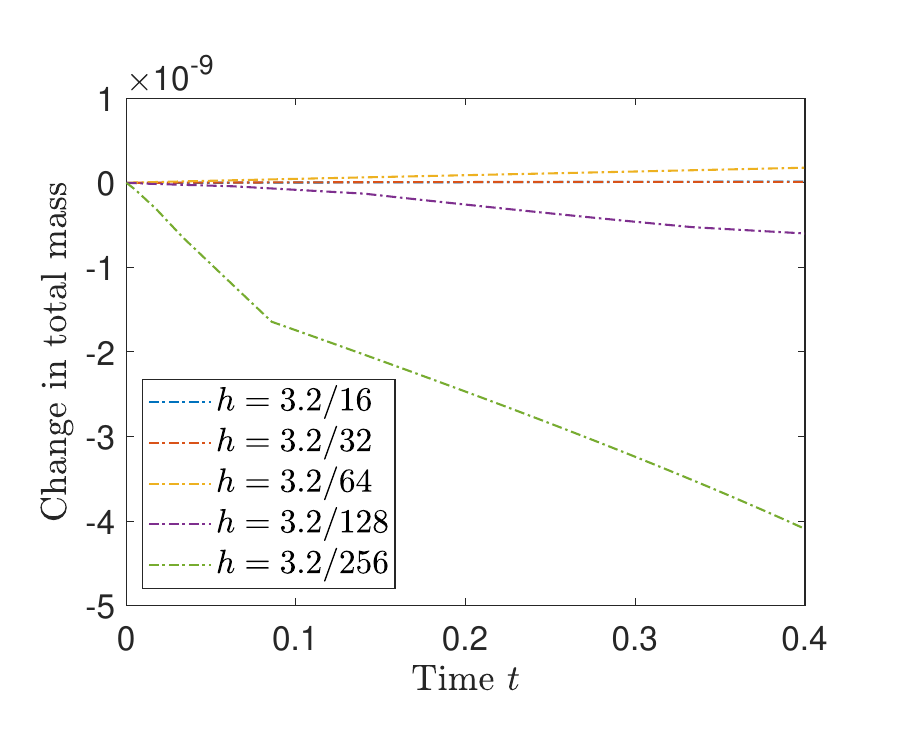}%
    \includegraphics[width=0.5\linewidth]{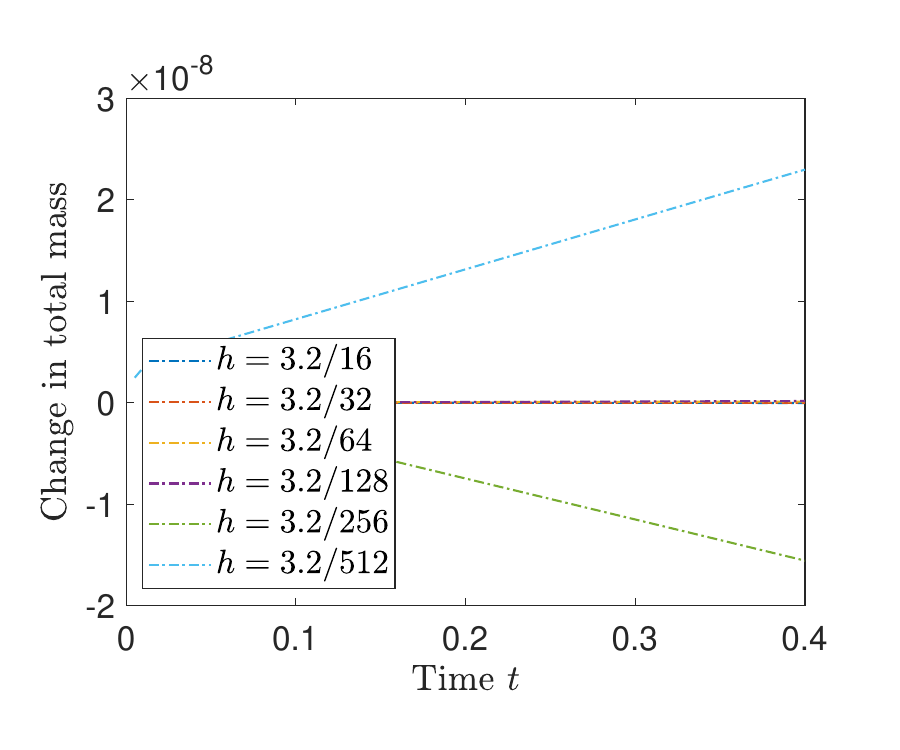}
\caption{(Convergence test) The change in total mass over time $t$. Left:
first-order scheme, right: second-order
scheme.}
\label{fig:other2}
\end{figure}

\begin{figure}[!t]
    \centering
    \includegraphics[width=0.5\linewidth]{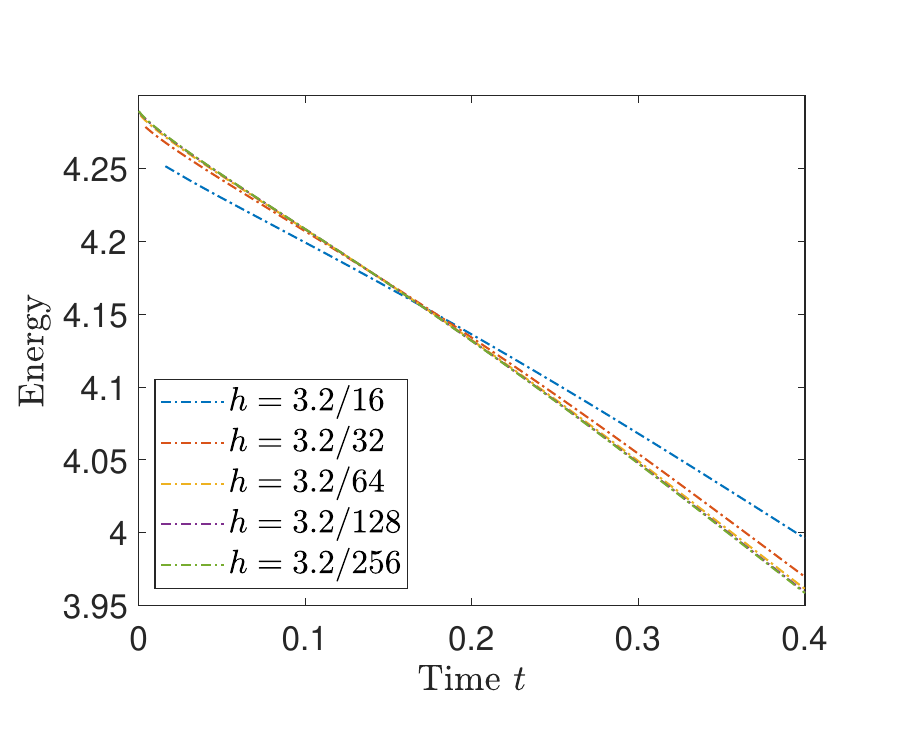}%
    \includegraphics[width=0.5\linewidth]{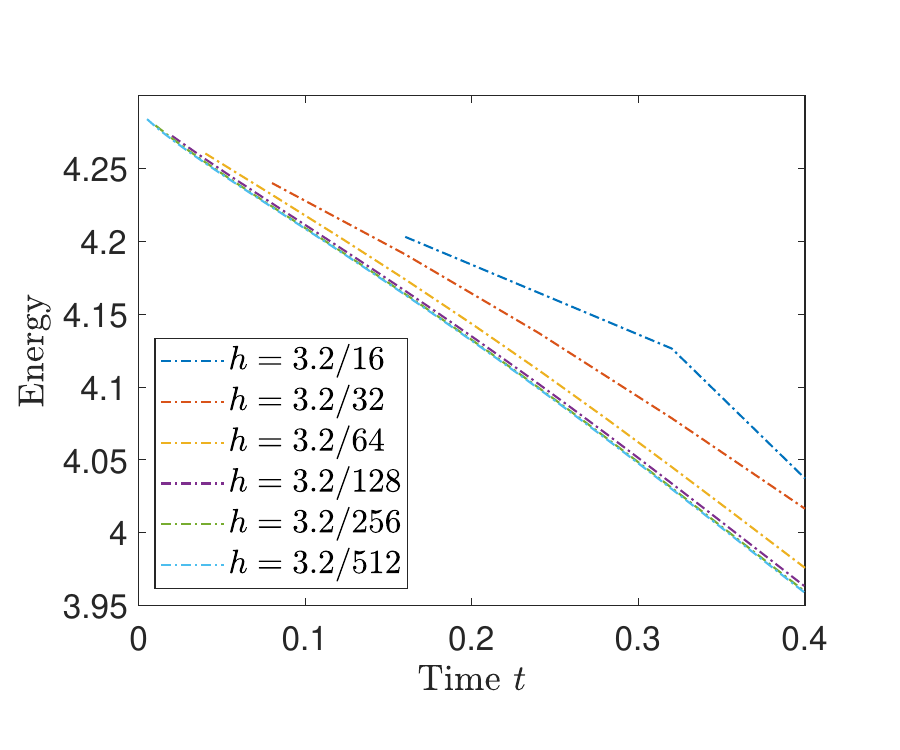}
\caption{(Convergence test) The energy over time $t$. Left: first-order scheme,
right: second-order scheme.}
\label{fig:other3}
\end{figure}

\begin{figure}[!t]
    \centering
    \includegraphics[width=0.5\linewidth]{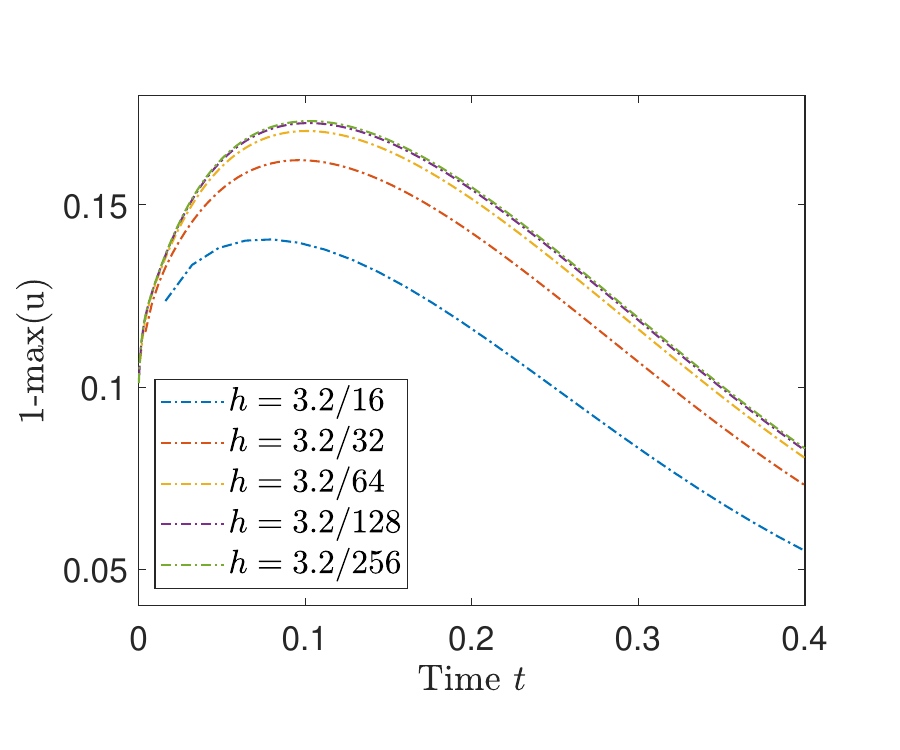}%
    \includegraphics[width=0.5\linewidth]{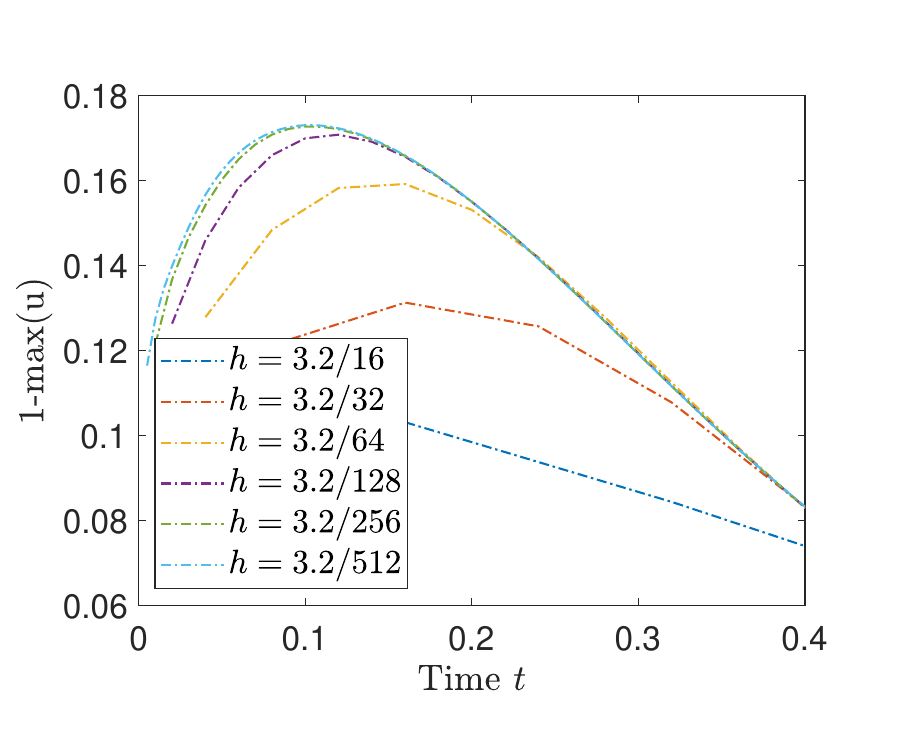}\\
    \includegraphics[width=0.5\linewidth]{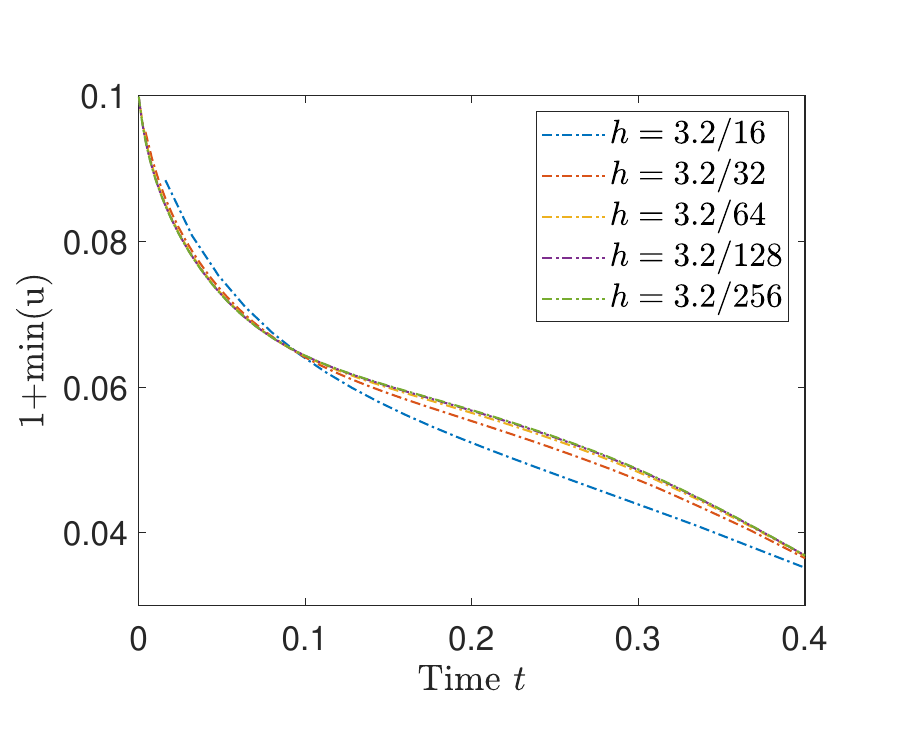}%
    \includegraphics[width=0.5\linewidth]{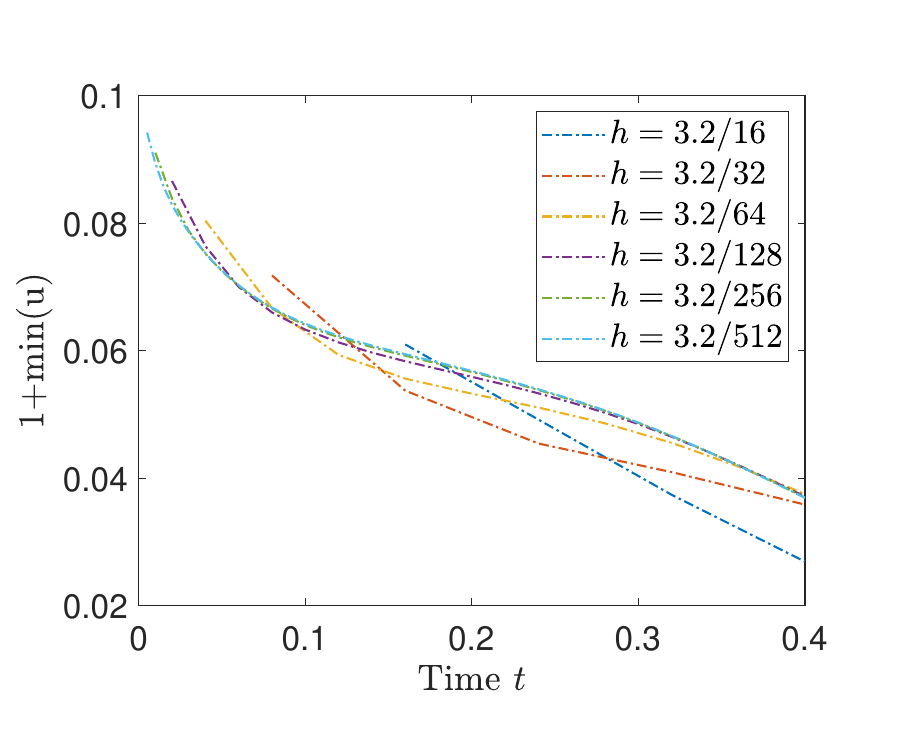}
\caption{(Convergence test) $1-\max(u)$ and $1+\min(u)$ over time $t$. Left:
first-order scheme, right: second-order
scheme.}
\label{fig:other4}
\end{figure}

\subsection{Two-dimensional simulation of coarsening process}
\label{sec:numerical_2}

We show a two-dimensional simulation of coarsening processes here. The initial
condition is
\begin{equation}
    u_{i,j}^0=0.15+r_{i,j},\quad i,j\in\{1,2,\cdots,N\},
\end{equation}
where $r_{i,j}$ is a uniformly distributed random variable from the interval
$[0,0.1]$, and the parameters are
\begin{equation}
    L=1,\ \epsilon=0.01,\ \theta_0=3,\ T=1,\ \tau=0.001,\ N=128.
\end{equation}
This example demonstrates the numerical results of the coarsening process
starting from smooth initial conditions. The coefficients in the solver are
\begin{equation}
    \rho_u=1,\quad \rho_w=1,\quad \alpha=1/2.
\end{equation}
The convergence criterion is set as
\begin{equation}
    \sqrt{h^2\|u_1^{(k)}-u_2^{(k)}\|_2^2+h^2\|w_1^{(k)}-w_2^{(k)}\|_2^2}\leq
    \gamma=10^{-8}.
\end{equation}
We use the second-order scheme here. From the numerical results in Figure
\ref{fig:coarsening2}, we can see that the phase field $u$ evolves from the
initial condition to a coarsened state. The phase field $u$ is coarsened at
different time steps, and the coarsening process is consistent with the physical
phenomenon of phase separation. The numerical result demonstrates the robustness
of our solver in simulating the coarsening.

The number of iterations, the change in total mass, the energy, $1-\max(u)$ and
$1+\min(u)$ over time are given in Figure \ref{fig:other21}, and Figure
\ref{fig:other24}, which
confirm the energy stable and the bound preserving properties.

\begin{figure}[!t]
    \includegraphics[width=0.25\linewidth]{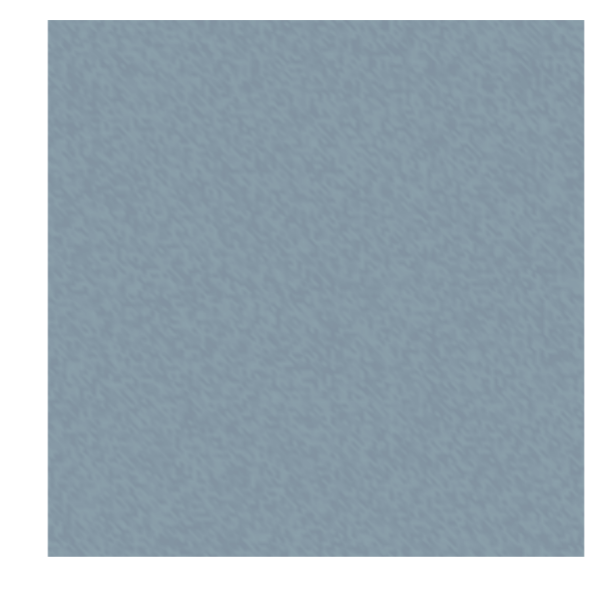}%
    \includegraphics[width=0.25\linewidth]{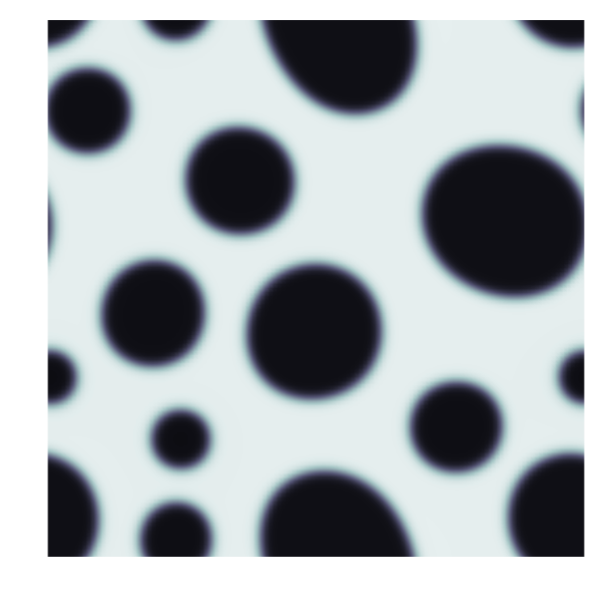}%
    \includegraphics[width=0.25\linewidth]{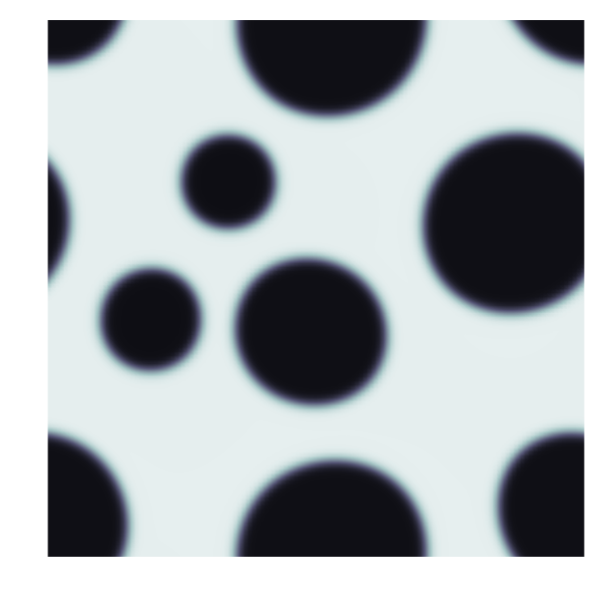}\\
    \includegraphics[width=0.25\linewidth]{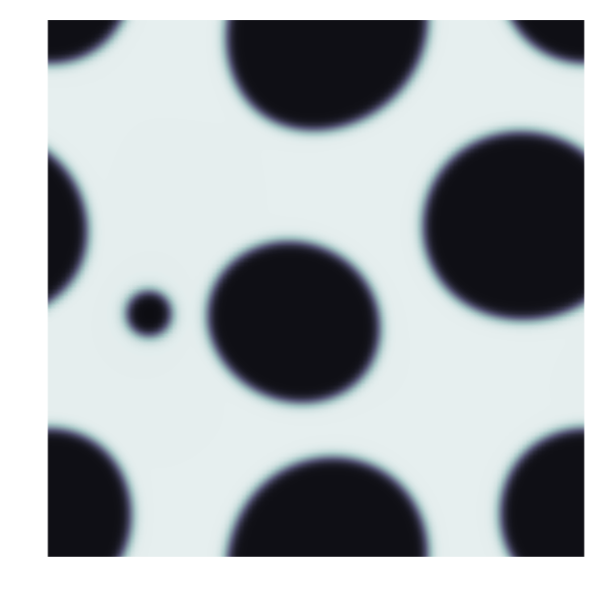}%
    \includegraphics[width=0.25\linewidth]{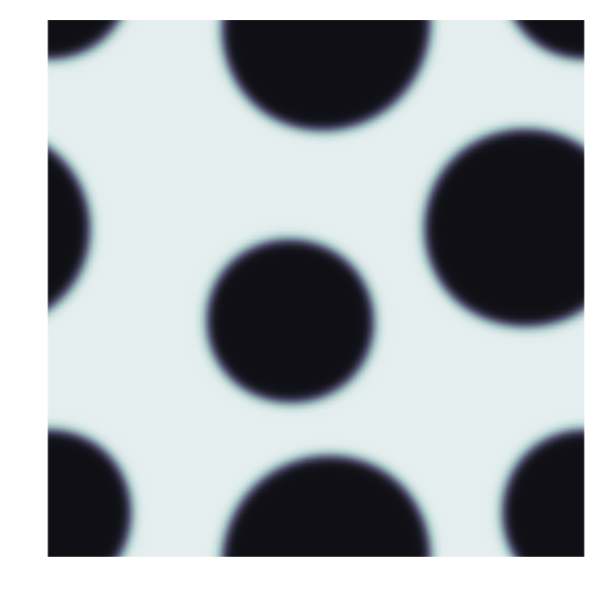}%
    \includegraphics[width=0.25\linewidth]{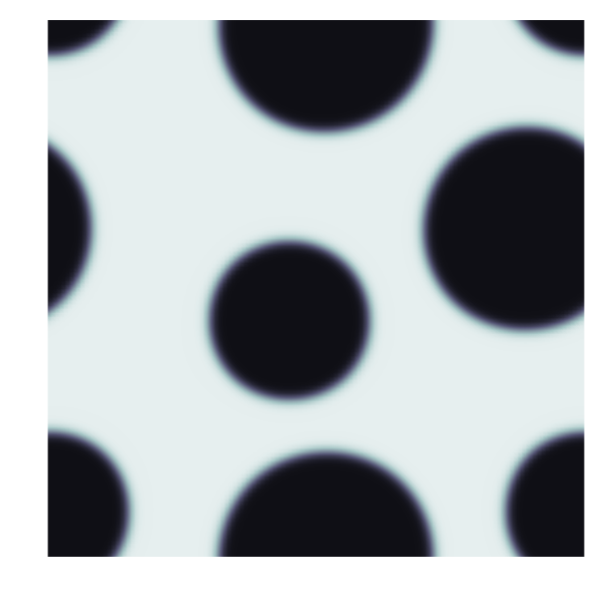}
\caption{(2D coarsening process) The coarsening process of the phase field $u$
at different time steps. From top to bottom, left to right,
$t=0,0.2,0.4,0.6,0.8,1$. The numerical solution is close to $1$ in the white
region and close to $-1$ in the
black region.}
\label{fig:coarsening2}
\end{figure}

\begin{figure}[!t]
    \centering
    \includegraphics[width=0.33\linewidth]{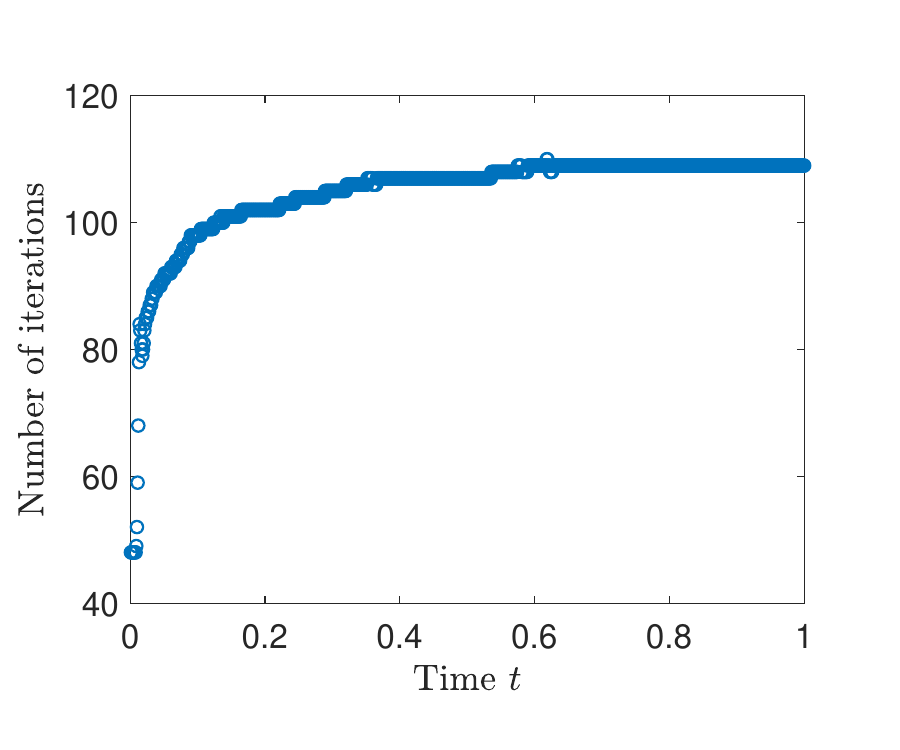}%
    \includegraphics[width=0.33\linewidth]{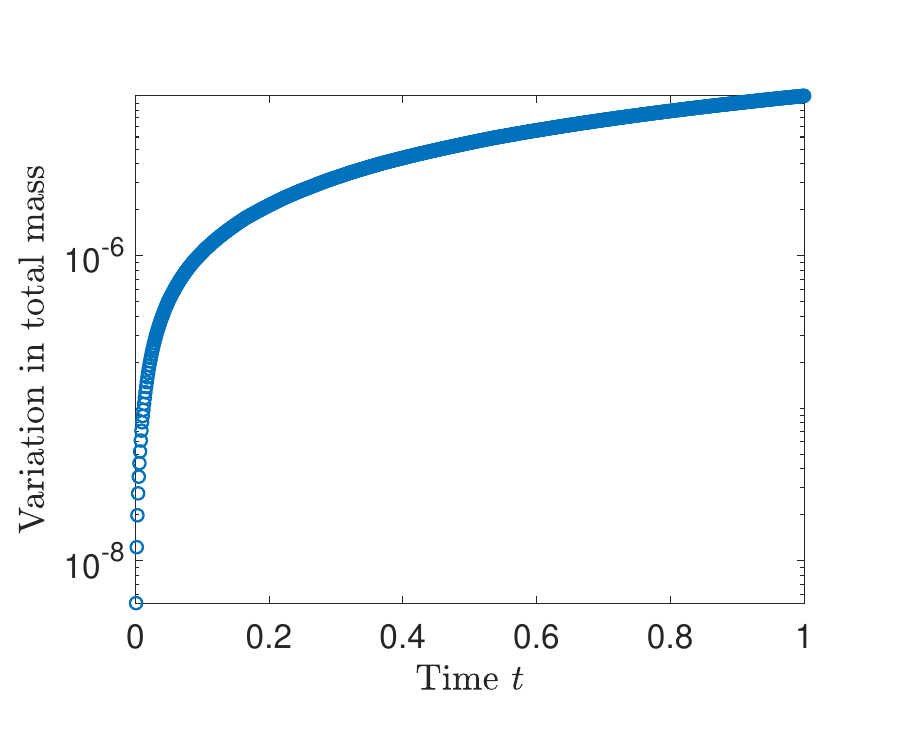}%
    \includegraphics[width=0.33\linewidth]{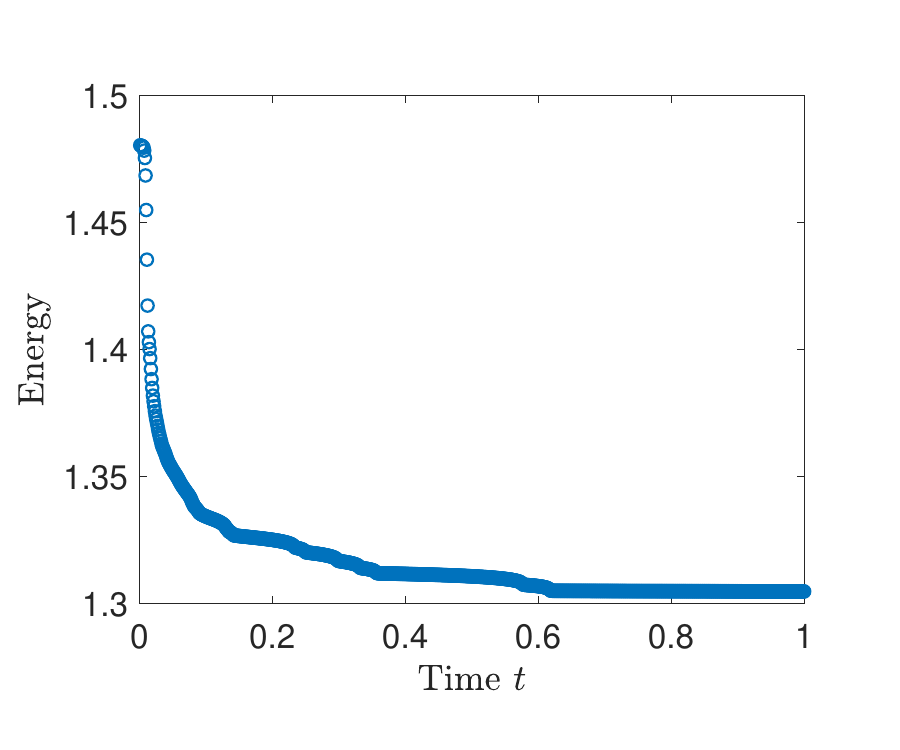}
\caption{(2D coarsening process)
Left: the number of iterations over time $t$, middle: the variation in total
mass over time $t$, right: the energy over time $t$.}
\label{fig:other21}
\end{figure}

\begin{figure}[!t]
    \centering
    \includegraphics[width=0.4\linewidth]{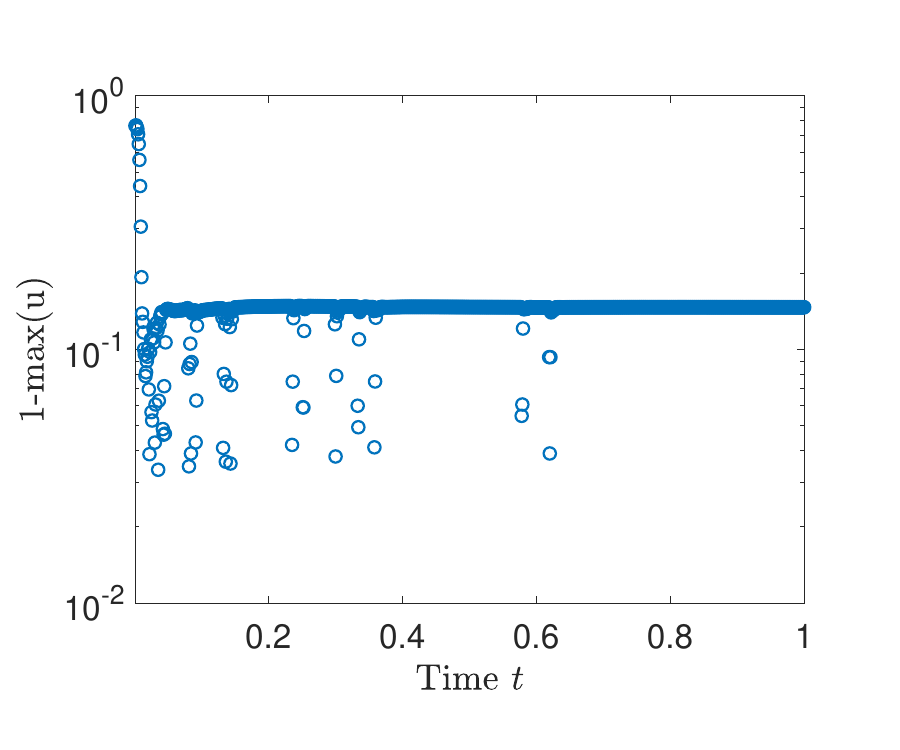}%
    \includegraphics[width=0.4\linewidth]{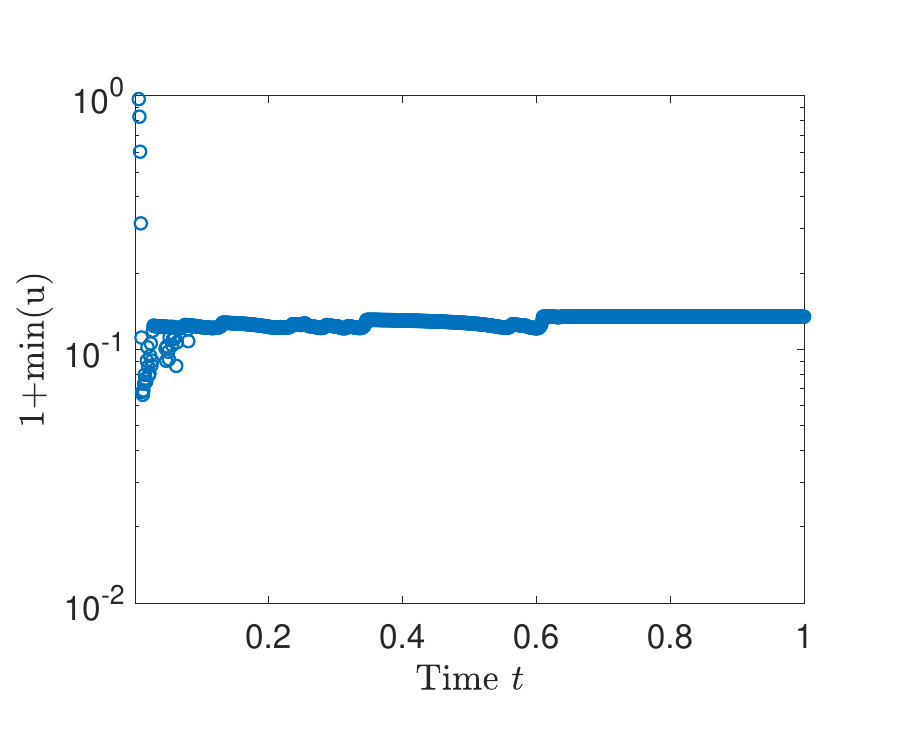}
\caption{(2D coarsening process)
$1-\max(u)$ and $1+\min(u)$ over time $t$.}
\label{fig:other24}
\end{figure}

\subsection{Three-dimensional simulation of coarsening process}
\label{sec:numerical_3}

We show a three-dimensional simulation of coarsening processes here. The initial
condition is
\begin{equation}
    u_{i,j,\ell}^0=r_{i,j,\ell},\quad i,j,\ell\in\{1,2,\cdots,N\},
\end{equation}
where $r_{i,j,\ell}$ is a uniformly distributed random variable from the
interval $[-0.05,0.05]$, and the parameters are
\begin{equation}
    L=1,\ \epsilon=0.01,\ \theta_0=3,\ T=2,\ \tau=0.01,\ N=64.
\end{equation}
The coefficients in the solver are
\begin{equation}
    \rho_u=\rho_w=1,\quad \alpha=1/2.
\end{equation}
The convergence criterion is set as
\begin{equation}
    \sqrt{h^2\|u_1^{(k)}-u_2^{(k)}\|_2^2+h^2\|w_1^{(k)}-w_2^{(k)}\|_2^2}\leq
    \gamma=10^{-8}.
\end{equation}
We use the second-order scheme here. Notice that the time step $\tau=0.01$ is
significantly large, which verifies that our method has no requirement for the
time step. From the numerical results in Figure
\ref{fig:coarsening}, we can see that the phase field $u$ evolves from the
initial condition to a coarsened state. The phase field $u$ is coarsened at
different time steps, and the coarsening process is consistent with the physical
phenomenon of phase separation. The numerical result demonstrates the robustness
of our solver in simulating the coarsening. Also, the energy stable property is
confirmed by the energy over time. We can see that the energy is decreasing over
time, as shown in Figure \ref{fig:coarsening1}. It is observed that the rate of
energy decay is exponential, which aligns with the physical reality.

The number of iterations, the change in total mass, $1-\max(u)$ and
$1+\min(u)$ over time are given in Figure \ref{fig:other31} and Figure
\ref{fig:other34}, which
confirm the energy stable and the bound preserving properties.

\begin{figure}[!t]
    \includegraphics[width=0.3\linewidth]{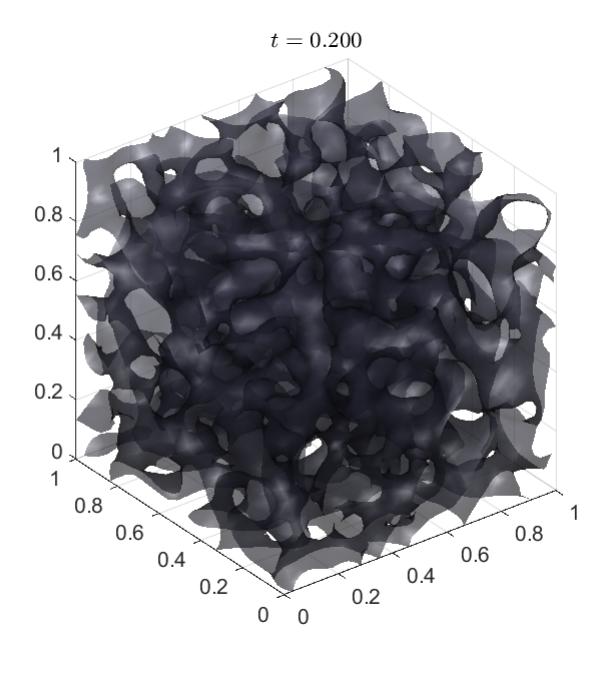}%
    \includegraphics[width=0.3\linewidth]{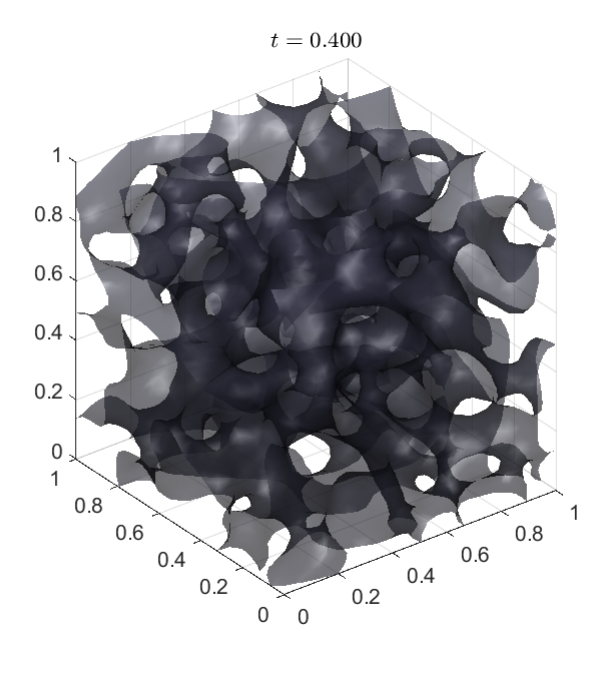}%
    \includegraphics[width=0.3\linewidth]{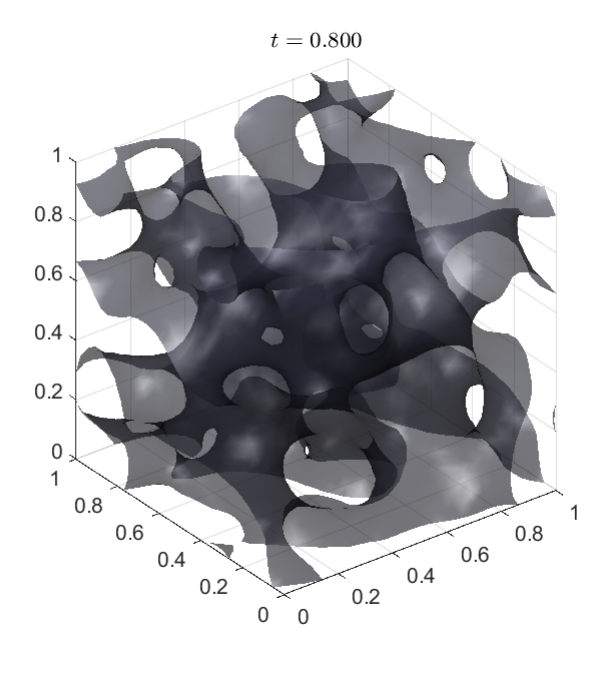}\\
    \includegraphics[width=0.3\linewidth]{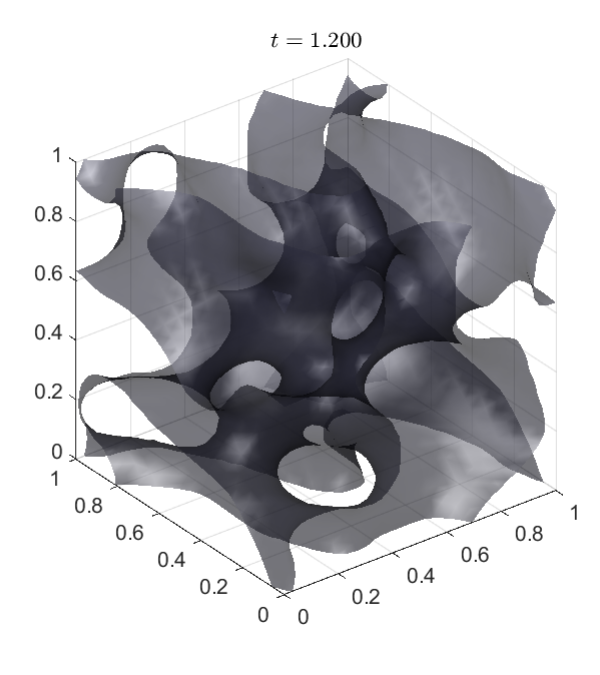}%
    \includegraphics[width=0.3\linewidth]{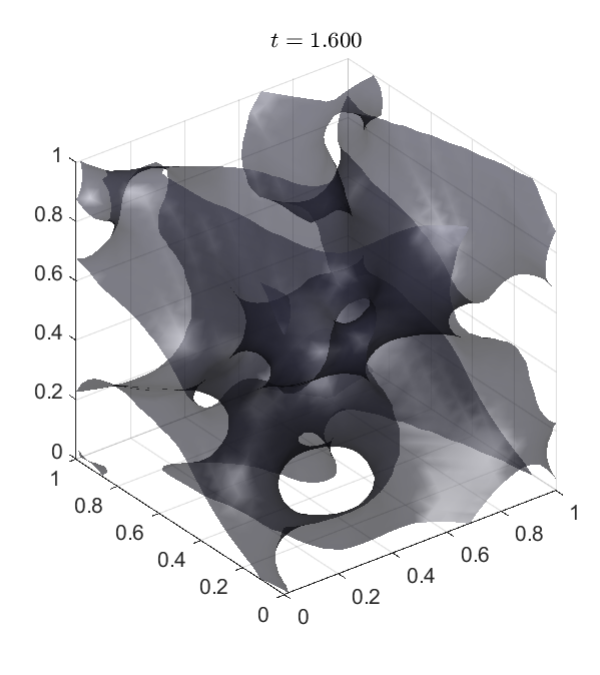}%
    \includegraphics[width=0.3\linewidth]{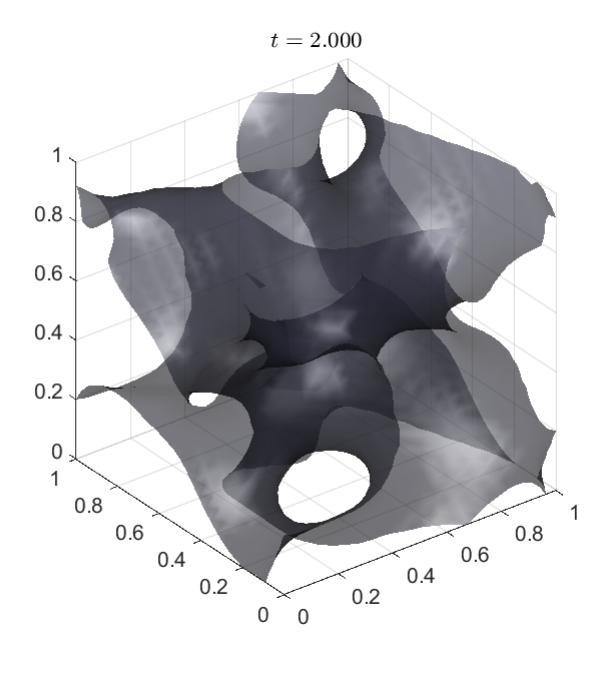}
\caption{(3D coarsening process) The coarsening process of the phase field $u$
at different time steps. The figure shows the interface of $u=0$ by
interpolating the numerical solution.}
\label{fig:coarsening}
\end{figure}

\begin{figure}[!t]
    \centering
    \includegraphics[width=0.4\linewidth]{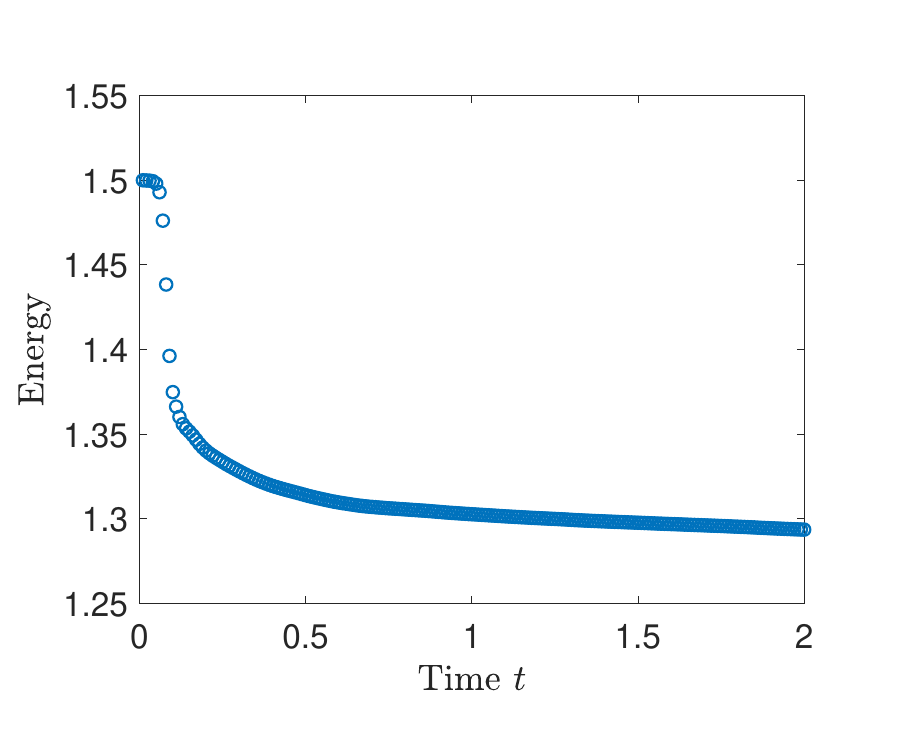}%
    \includegraphics[width=0.4\linewidth]{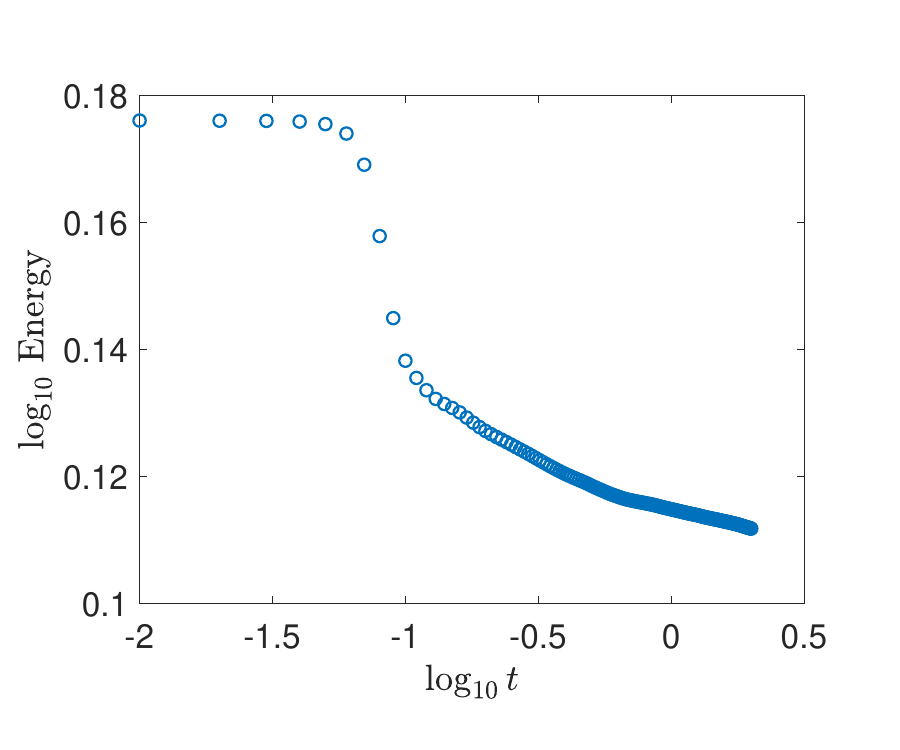}
\caption{(3D coarsening process) The energy over time $t$.}
\label{fig:coarsening1}
\end{figure}

\begin{figure}[!t]
    \centering
    \includegraphics[width=0.4\linewidth]{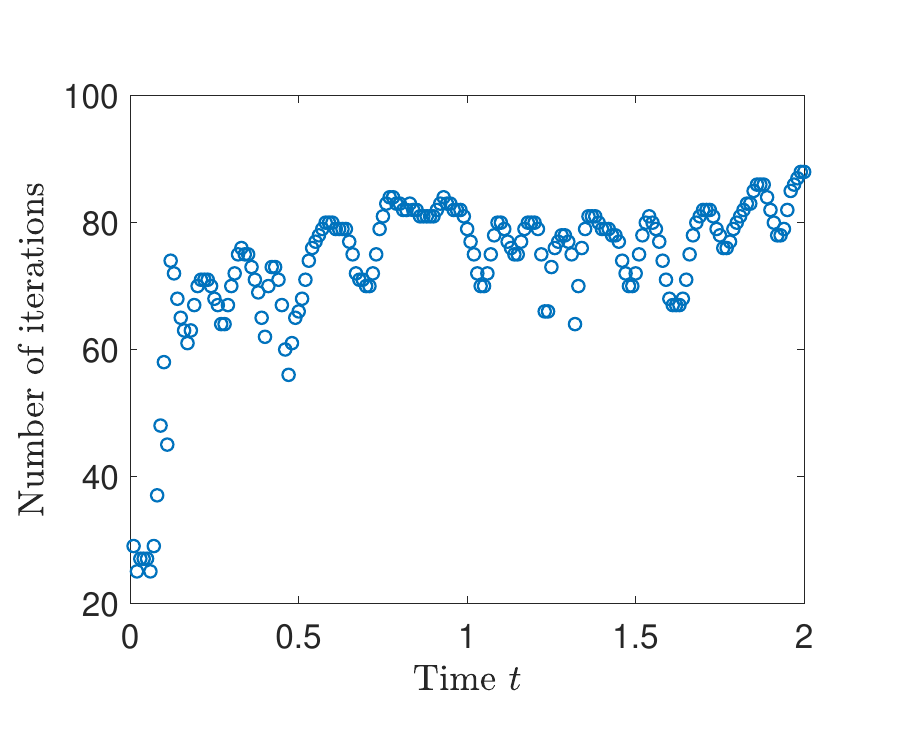}%
    \includegraphics[width=0.4\linewidth]{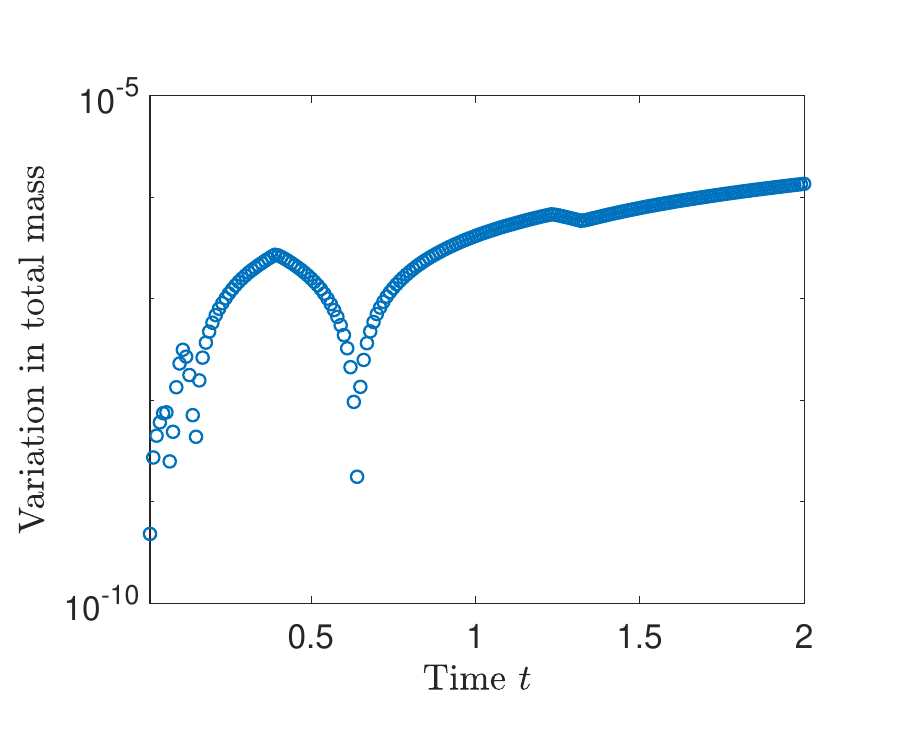}
\caption{(3D coarsening process)
Left: the number of iterations over time $t$, middle: the variation in total
mass over time $t$.}
\label{fig:other31}
\end{figure}

\begin{figure}[!t]
    \centering
    \includegraphics[width=0.4\linewidth]{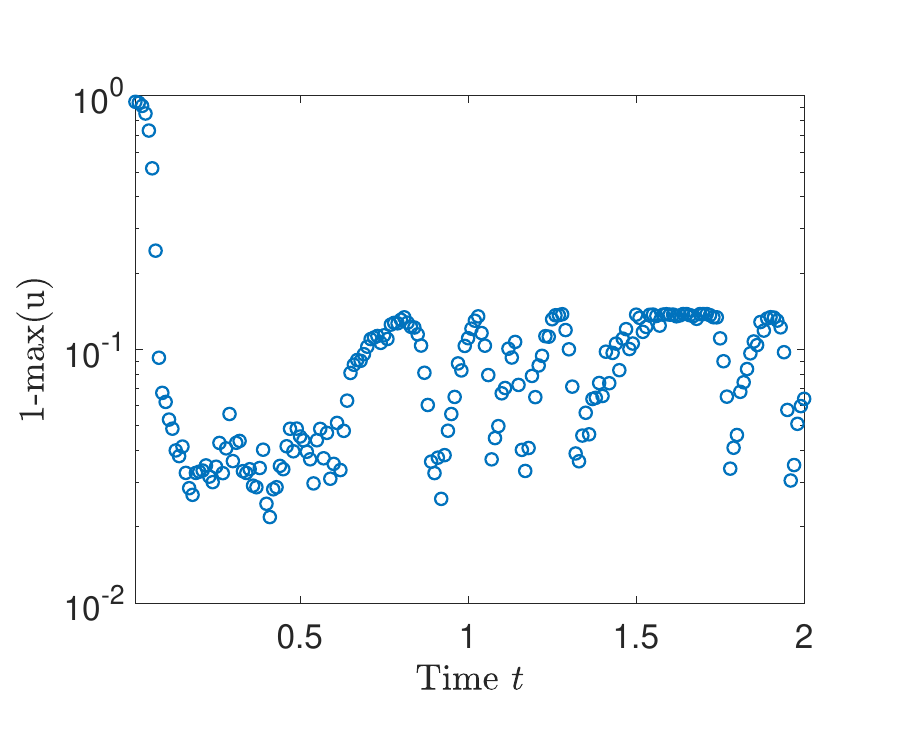}%
    \includegraphics[width=0.4\linewidth]{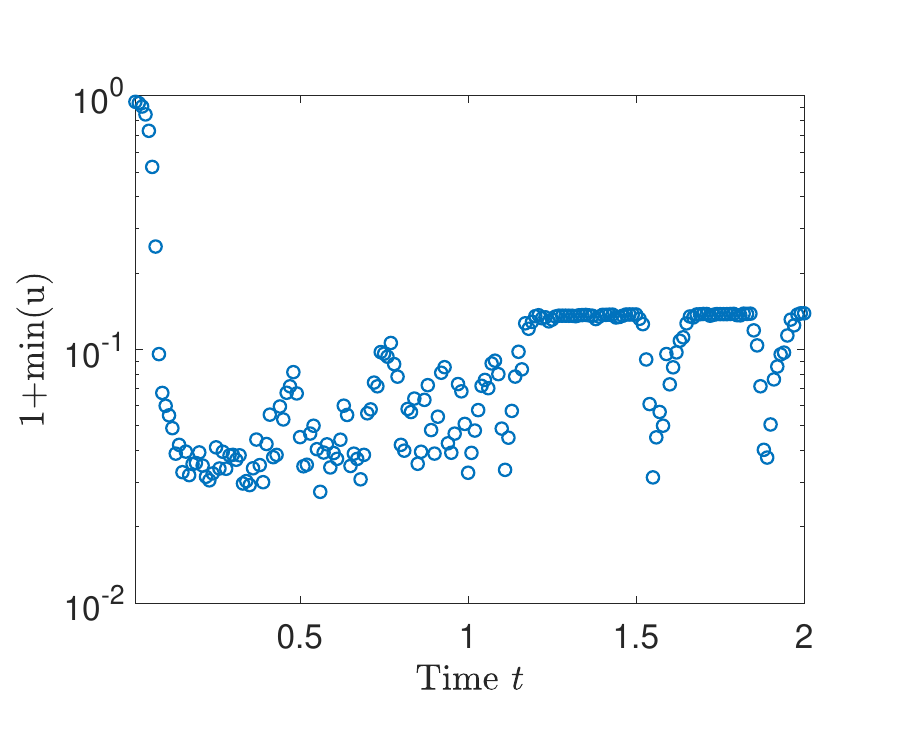}
\caption{(3D coarsening process)
$1-\max(u)$ and $1+\min(u)$ over time $t$.}
\label{fig:other34}
\end{figure}

\section{Concluding remarks}
\label{sec:conclusion}

In this paper, we introduce a novel iterative solver for the discrete system derived from the Cahn-Hilliard equation with the Flory-Huggins potential. By reformulating the original nonlinear system as an equivalent minimax optimization problem, we employ a new variant of the ADMM to address the resulting saddle-point minimax problem. We rigorously prove the convergence of our solver without imposing additional restrictions, such as conditions on the time step or the strong separability property. Furthermore, we analyze the bound-preserving characteristics and the evolution of the total mass. Numerical experiments, including coarsening process simulations in both two and three dimensions, support our theoretical findings.

As an initial study in this area, we focus on the finite difference method for spatial discretization under periodic boundary conditions. Nevertheless, the proposed solver can be readily extended to other spatial discretization techniques, such as the finite element method \cite{yuan2022second}, and to various boundary conditions, including Neumann and Dirichlet types. Applying this solver framework to other phase field models and to more complex scenarios, such as solar cell simulations \cite{wodo2012modeling}, is a promising direction for future work. Additionally, our iterative algorithm for nonlinear systems is currently tailored to the convex splitting numerical scheme. We aim to further reduce the number of iterations required by optimizing parameter choices and constructing effective initial guesses. Finally, inspired by the ideas presented in this work, we plan to investigate directly linearized discrete schemes to avoid the high computational cost of repeatedly solving nonlinear systems at each time step.

\bibliographystyle{amsplain}
\bibliography{article.bib}

\end{document}